\newcommand{\RR}{\mathbb{R}}
\newcommand{\dx}{\textrm{d}x}
\title{On a nonlinear parabolic problem arising \\ in the quantum diffusive description \\
of a degenerate fermion gas
\thanks{This paper has been supported by INDAM (GNFM project ``Quantum fluid-dynamics for systems of identical particles: analytical 
and numerical study''), by the ANR project number ANR-14-ACHN-0030-01 \textit{Kimega} and by the bilateral programme ``Galileo'', project number G14-34.} }
\author{Luigi Barletti\thanks{Dipartimento di Matematica e Informatica ``U.\ Dini'', Universit\`a degli Studi di Firenze, Italy (\email{luigi.barletti@unifi.it}).}
\and 
Francesco Salvarani\thanks{CEREMADE, UMR CNRS 7534, Universit\'e Paris-Dauphine, PSL Research University, 
France \& Dipartimento di Matematica ``F. Casorati'', Universit\`a degli Studi di Pavia, Italy 
(\email{francesco.salvarani@unipv.it}).}
}
\begin{document}

\maketitle
\slugger{siap}{xxxx}{xx}{x}{x--x}%slugger should be set to mms, siap, sicomp, sicon, sidma, sima, simax, sinum, siopt, sisc, or sirev

\begin{abstract}
This article studies, both theoretically and numerically, a nonlinear drift-diffusion equation
describing a gas of fermions in the zero-temperature limit.
The equation is considered on a bounded domain whose boundary is divided into an ``insulating'' part, 
where homogeneous Neumann conditions are imposed, and a ``contact'' part, where 
non-homogeneous Dirichlet data are assigned.
The existence of stationary solutions for a suitable class of Dirichlet data is proven by assuming 
a simple domain configuration. 
The long-time behavior of the time-dependent solution, for more complex domain configurations,
is investigated by means of numerical experiments.
\end{abstract}

\begin{keywords}
quantum drift-diffusion,
fermions,
nonlinear parabolic equations,
mixed boundary conditions
\end{keywords}

\begin{AMS}
35K61,
76Y05,
82D37
\end{AMS}

\pagestyle{myheadings}
\thispagestyle{plain}
\markboth{L.~BARLETTI AND F.~SALVARANI}{A NONLINEAR DIFFUSION PROBLEM FOR A FERMION GAS}

%%%%%%%%%%%%%%%%%%%%%%%%%%
\section{Introduction}
\label{intro}
%%%%%%%%%%%%%%%%%%%%%%%%%
The derivation of quantum fluid equations from quantum kinetic equations \cite{BarFroMor14, Jungel, TR10} 
is a natural problem in quantum statistical mechanics, in exactly the same way as the derivation of classical fluid equations from Boltzmann equation is a standard topic in classical statistical mechanics 
\cite{bar-gol-lev-89, bar-gol-lev-91, bar-gol-lev-93, bou-gre-pav-sal-13,bou-gre-sal-13,Cerci88,des-mon-sal,gol-stray-04, gol-stray-09}.
It allows, indeed, to clarify the relationships between two levels of descriptions and to obtain ``quantum corrections'' to the classical fluid equations, that are difficult (if not impossible) to identify only on the grounds of physical intuition.
This is particularly true in the case of a quantum system of identical particles, i.e.\ obeying either the Bose-Einstein or the Fermi-Dirac statistics \cite{BaCi,TR10}.
\par
Quantum fluid equations, whose prototype are the Madelung equations \cite{Madelung}, are not only important from a theoretical point of view, but they are very interesting also for applications, in particular in semiconductor devices modelling \cite{BarFroMor14, Jungel, MR1818867, MR1479577}.
Indeed, the fluid description of a quantum system has many practical advantages: first of all, it provides a description in terms of macroscopic variables with a direct physical interpretation
(such as density, current, temperature); moreover, it allows to model open systems in a very natural way, by imposing suitable semiclassical boundary conditions. 
\par
In this article, we study a very specific model, namely the diffusive equation for a degenerate (i.e.\ at zero absolute temperature) gas of fermions in two space dimensions. 
The two-dimensional case is rather peculiar, since the third-order ``quantum pressure'' term, remarkably, vanishes \cite{BaCi, TR10}. The resulting diffusive equation have hence the form of a purely semiclassical equation 
\cite{JKP11}, although being (formally) exact up to order $\hbar^4$.
Such equation (after a suitable rescaling of variables) reads as follows \cite{BaCi,TR10}:
\begin{equation}
\label{PME}
u_t=\nabla\cdot(u \nabla (u+V)),
\end{equation}
where $V$ is a given potential and  $\nabla = ({\partial}/{\partial x_1},{\partial}/{\partial x_2})$, %and $\xi>0$ is a constant, 
and has therefore the form of a two-dimensional ``porous media'' equation  \cite{Vazquez07} endowed with a drift term.
In this paper we choose to work on Equation (\ref{PME}), not only because of its particularly neat form, but also 
because the use of two-dimensional models is natural in many instances of semiconductor devices
\cite{Jungel,MRSbook}.
\par
The mathematical study of Equation (\ref{PME}) has been partially developed in the literature.
In particular, existence and uniqueness of a weak solution in the evolutionary case has been proven in \cite{AL} and in \cite{MR1835610}, whereas an analysis of the long-time behaviour has been provided in \cite{CJMTU} and \cite{MR1866628}.
In \cite{AL}, the time-dependent equation is endowed with mixed homogeneous Neumann and non-homogeneous Dirichlet boundary data; in \cite{CJMTU}, also the stationary equation is considered, but only in the case of homogeneous Neumann boundary data.
\par
However, a semiconductor device cannot be fully described by imposing only homogeneous Dirichlet boundary conditions. Indeed, for describing real situations, a portion of the boundary (corresponding to metallic contacts) should be described by using non-homogeneous Dirichlet data, whereas other regions of the boundary (corresponding to insulating boundaries) should be supplemented with homogeneous Neumann conditions \cite{Jungel,MRSbook}.
\par
The main aim of this article is hence to investigate, both theoretically and numerically, some aspects of Equation (\ref{PME}) by allowing both non-homogeneous Dirichlet boundary data and homogeneous Neumann conditions. 
\par
The structure of the article is the following. 
In the next section we will formulate the mathematical problem and review some basic results on the evolutionary case. 
Then, in section \ref{Sec3}, we will study the stationary case.
Particular attention will be given to the situation in which the domain is rectangular and both the data and the potential depend only on one space variable, in which case solutions that depend only on that variable can be considered.
A theorem of existence and uniqueness for such solutions, requiring some restrictions on the Dirichlet data, will be proven.
Finally, in section \ref{Sec4}, we will describe a numerical procedure for studying the initial-boundary value problem and provide some numerical experiments.

%%%%%%%%%%%%%%%%%%%%%%%%%%%%%%%%%%%%%%%%
\section{The mathematical problem}
\label{Sec2}
%%%%%%%%%%%%%%%%%%%%%%%%%%%%%%%%%%%%%%%%

Let $\Omega\subset\RR^2$ be a bounded domain, with a piecewise smooth boundary $\partial \Omega$, and let
$\Gamma_D$ and $\Gamma_N$ be two non-empty subsets of $\partial \Omega$, such that 
$$
  \Gamma_D \cap \Gamma_N =\emptyset 
  \quad \textrm{ and } \quad
  \partial \Omega = \Gamma_D \cup \Gamma_N.
$$
The gas of fermions in the zero-temperature limit is described by a density
function $u: \RR^+\times \Omega\to \RR^+$,
whose time evolution is governed by the following nonlinear drift-diffusion equation \cite{BaCi}:
\begin{equation}
\label{FDequation}
\left\{
\begin{aligned}
&u_t+\nabla\cdot J=0,
\\
&J=-u \nabla \left(u +V\right),
\end{aligned}
\right.
\qquad (t,x)\in \RR^+\times \Omega.
\end{equation}
Here $V : \overline \Omega \to \RR$ is a given potential which, for the sake of simplicity, will be assumed to
be continuously differentiable. 
The problem is supplemented with the initial datum
\begin{equation}
\label{ic}
u(0,x)= u_\mathrm{in}(x),\quad x\in \Omega
\end{equation}
and mixed boundary conditions
\begin{equation}
\label{bc}
\begin{aligned}
&u(t,x)= u_D(x),\quad (t,x)\in \RR^+\times\Gamma_D
\\[6pt]
&J(t,x) \cdot n_x = 0,\quad (t,x)\in \RR^+\times\Gamma_N,
\end{aligned}
\end{equation}
where $u_\mathrm{in}(x) >0$ for a.e.\ $x\in\Omega$, $u_D(x)>0$ for a.e. $x\in \Gamma_D$
and $n_x$ is the outward normal to $\Gamma_N$, at $x\in \Gamma_N$.
\par
Equation $(\ref{FDequation})$,
with non-negative mixed Neumann/Dirichlet boundary conditions
$(\ref{ic})$ and $(\ref{bc})$ has been studied in several works, and 
its main properties have been analyzed.
In particular, existence, uniqueness and preservation of the cone of non-negative functions for
problem \eqref{FDequation}--\eqref{ic}--\eqref{bc} are the specialization to the uncoupled case (without reaction terms) of the 
results obtained in Ref.~\cite{MR1835610}.
More precisely, by adapting to our case the theorems proven in  sections 2 and 3 of Ref.~\cite{MR1835610}, we can state the following:
\begin{theorem}
\label{exist}
Let $u_\mathrm{in}\in L^p(\Omega)$,
$u_D\in L^p(0, T; W^{1,p} (\Omega)) \cap L^\infty((0,T)\times \Omega)$, $p\geq 1$,
and $\partial_t u_D\in L^1(0, T; L^\infty (\Omega))$, with non-negative $u_\mathrm{in}$ and $u_D$.
Then, there exists a unique non-negative weak solution $u\in L^p(0, T; W^{1,p} (\Omega)) +  L^p(0, T; \Xi)$ 
of the initial-boundary value problem $(\ref{FDequation})$--$(\ref{ic})$--$(\ref{bc})$, where
$$
\Xi=\{ \xi \in  W^{1,p} (\Omega) \, : \,  \xi=0 \text{ on }\Gamma_D\}.
$$
\end{theorem}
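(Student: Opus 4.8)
The plan is to deduce the statement by specialising the existence and uniqueness theory for reaction--diffusion systems of \cite{MR1835610} to a single equation with the reaction term switched off; I sketch the route. The first move is to homogenise the Dirichlet part of the data. Since $u_D$ is given on all of $\Omega$, I would set $v:=u-u_D$ and look for $v\in L^p(0,T;\Xi)$, so that the Dirichlet condition $u=u_D$ on $\Gamma_D$ and the asserted membership $u\in L^p(0,T;W^{1,p}(\Omega))+L^p(0,T;\Xi)$ are built in. Writing $u\nabla(u+V)=\tfrac{1}{2}\nabla(u^2)+u\nabla V$, the function $v$ then solves a problem of the same structure — a porous-medium-type diffusion $\tfrac{1}{2}\Delta\big((u_D+v)^2\big)$ (cf.\ \cite{Vazquez07}) perturbed by the bounded drift $\nabla\cdot\big((u_D+v)\nabla V\big)$ — with homogeneous Dirichlet data on $\Gamma_D$, the natural (no-flux) condition $J\cdot n_x=0$ on $\Gamma_N$, initial datum $u_{\mathrm{in}}-u_D(0,\cdot)\in L^p(\Omega)$, and source term $-\partial_t u_D\in L^1(0,T;L^\infty(\Omega))$.

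Next I would run a non-degenerate approximation: replace the coefficient $u_D+v$ in front of the gradient by $\sigma_\e(u_D+v)$, with $\sigma_\e\colon\RR\to[\e,1/\e]$ a smooth truncation. For each fixed $\e>0$ the problem is uniformly parabolic and quasilinear, and a solution $u_\e$ can be produced by a Galerkin scheme together with the Aubin--Lions lemma (or by a Leray--Schauder fixed-point argument). The core of the proof is then a family of a priori estimates on $u_\e$, uniform in $\e$: (i) non-negativity, by testing with the negative part $(u_\e)_-$ and checking that the boundary contributions on $\Gamma_D$ and $\Gamma_N$ have the favourable sign because $u_{\mathrm{in}},u_D\ge0$ and $J\cdot n_x=0$ on $\Gamma_N$; (ii) an $L^\infty(0,T;L^p(\Omega))\cap L^p(0,T;W^{1,p}(\Omega))$ bound, obtained by testing with a suitable power of $u_\e$, supplemented for positive times by an $L^\infty$ control coming from a Stampacchia truncation / comparison argument that uses $u_D\in L^\infty$; (iii) a bound on $\partial_t u_\e$ in a negative-order space, to gain compactness in time.

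I would then pass to the limit $\e\to0$. By the uniform estimates and Aubin--Lions, $u_\e\to u$ strongly in $L^p((0,T)\times\Omega)$ and a.e., while $\nabla u_\e\rightharpoonup\nabla u$ weakly; since the truncation levels of $\sigma_\e$ escape to $0$ and $+\infty$, also $\sigma_\e(u_\e)\to u$ strongly, so the flux $\sigma_\e(u_\e)\nabla(u_\e+V)$ converges to $u\nabla(u+V)$ in $\mathcal{D}'$. Hence $u=u_D+v$ is a non-negative weak solution in the asserted class, and the Dirichlet and no-flux conditions are inherited in the trace sense. (Alternatively one may regularise the nonlinearity $s\mapsto s^2$ itself and identify the limit of the nonlinear flux by a Minty--Browder monotonicity argument; I would pick whichever is cleaner once the boundary terms are written out.)

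Finally, uniqueness, which I expect to be the delicate point. If $u_1,u_2$ are two solutions with the same data, then $w:=u_1-u_2$ satisfies, weakly, $w_t=\tfrac{1}{2}\Delta\big((u_1+u_2)w\big)+\nabla\cdot(w\nabla V)$, with $w=0$ on $\Gamma_D$, the corresponding no-flux condition on $\Gamma_N$, and $w(0,\cdot)=0$. I would close the argument by the Oleinik--Holmgren duality method: for an arbitrary test function $\chi$ vanishing on $\Gamma_D$, solve the backward-in-time, non-degenerate dual problem $\varphi_t+\tfrac{1}{2}a_\delta\,\Delta\varphi-\nabla V\cdot\nabla\varphi=0$ on $(0,T)\times\Omega$, with $\varphi(T,\cdot)=\chi$, $\varphi=0$ on $\Gamma_D$ and zero normal derivative on $\Gamma_N$, where $a_\delta\ge\delta$ is a smooth approximation of $\tfrac{1}{2}(u_1+u_2)$; testing the equation for $w$ against $\varphi$, the boundary terms drop out thanks to the matching mixed conditions, and letting $\delta\to0$ forces $\int_\Omega w(T,\cdot)\,\chi\,\dx=0$, whence $w\equiv0$. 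The places where the real work hides — and the reason one ultimately cites \cite{MR1835610} rather than reproving everything — are: recovering the $L^p(0,T;W^{1,p}(\Omega))$-regularity of the limit from estimates that, for a degenerate diffusion, most naturally control $u^{p/2}$ rather than $u$ itself; the uniform-in-$\e$ time-derivative bound (iii) when only $u_{\mathrm{in}}\in L^p$ is assumed; and the control of $\int_0^T\!\int_\Omega|\nabla\varphi|^2$ in the duality step despite the degeneracy of $u_1+u_2$. Note that $\Gamma_D\neq\emptyset$ is essential throughout, both for solvability of the dual problem and because it makes the mixed Dirichlet--Neumann Laplacian invertible, should one instead prefer an $H^{-1}$-type energy argument for uniqueness.
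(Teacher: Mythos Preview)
Your approach is exactly the one the paper takes: it does not give an independent proof but simply states that the result is the specialisation to the uncoupled, reaction-free case of the theorems in Sections~2 and~3 of \cite{MR1835610}, and then notes as an alternative the change of variable $z=u^2\,\sign(u)$, $b(z)=\sign(z)\sqrt{|z|}$, which recasts the equation as $\partial_t b(u)=\tfrac12\Delta u+\nabla\cdot(b(u)\nabla V)$ and puts it within the Alt--Luckhaus framework \cite{AL}. Your sketch of the homogenisation, regularisation, a~priori estimates and duality uniqueness is considerably more detailed than anything the paper itself provides, and is consistent with the machinery of \cite{MR1835610}.
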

Another approach for proving this result consists in transforming Equation (\ref{FDequation}) into the equation
\begin{equation}
\label{ALeq}
  \partial_t b(u) =  {\frac{1}{2}} \Delta u + \nabla\cdot\left(b(u) \nabla V \right)
\end{equation}
by means of the transformatiion
\begin{equation}
b(z) := \mathrm{sign}(z) \sqrt{\vert z \vert},
\end{equation}
i.e.
$$
z=u^2\mathrm{sign}(u),
$$
and then by using the existence and uniqueness theory of Alt and Luckhaus, which studied the class of nonlinear parabolic equations written above, with 
boundary conditions of type (\ref{bc}), in \cite{AL}.

In the next section we shall investigate the stationary case and prove (at least for a particular
class of initial/boundary conditions) a theorem of existence and uniqueness of the stationary solution.
Despite of the general result on the evolutionary equation, we are able to prove a sufficient condition which guarantees the well-posedness
of the  stationary problem only for a restricted class of ``supercritical'' Dirichlet data (see Theorem \ref{Theo1D}).
\par
It is worth to remark that the well-posedness of the initial value problem for Equation (\ref{FDequation}) 
with only Neumann conditions ($\Gamma_D = \emptyset$)
has been proven by Carrillo et Al.\ \cite{CJMTU}.
Then, the origin of troubles is clearly in the non-homogeneous Dirichlet conditions.

%%%%%%%%%%%%%%%%%%%%%%%%%%%%%%%%%%%%%%%%
\section{The stationary problem}
\label{Sec3}
%%%%%%%%%%%%%%%%%%%%%%%%%%%%%%%%%%%%%%%%
%
The study of non-negative stationary solutions of Equation (\ref{FDequation}), satisfying the prescribed mixed 
boundary conditions (\ref{bc}), leads to consider the problem
\begin{equation}
\label{StationaryFDequation}
\nabla\cdot \left[ u \nabla \left(u +V\right) \right]=0,\quad
x\in \Omega,
\end{equation}
with boundary conditions
\begin{equation}
\label{Stationarybc}
\begin{aligned}
u(x)= u_D(x),\quad x\in \Gamma_D,
\\[6pt]
u \nabla \left( u +V\right) \cdot n_x = 0,\quad x\in \Gamma_N.
\end{aligned}
\end{equation}

%%%%%%%%%%%%%%%%%%%%%%%%%%%%%%%%%%%%%%%%
\subsection{A general result in the two-dimensional case}
\label{S3.1}
%%%%%%%%%%%%%%%%%%%%%%%%%%%%%%%%%%%%%%%%
%
A peculiar feature of the stationary problem is the lack of uniqueness of the stationary solution, as shown in the following proposition.
\begin{proposition}
Let us suppose that $u_D \equiv 0$.
If $u \in H^1(\Omega)$ is a non-negative weak solution of the boundary value problem 
$(\ref{StationaryFDequation})$-$(\ref{Stationarybc})$, then, for every $x \in \Omega$,
either $u = 0$ or $\nabla(u+V) = 0$ in $x$. 
\end{proposition}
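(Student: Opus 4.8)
The plan is to show that the flux $u\,\nabla(u+V)$ vanishes almost everywhere in $\Omega$, which is exactly the claimed dichotomy: at a.e.\ point either $u=0$ or $\nabla(u+V)=0$. The weak formulation of \eqref{StationaryFDequation}--\eqref{Stationarybc} reads: $u\in H^1(\Omega)$, $u\ge 0$, $u=u_D=0$ on $\Gamma_D$, and
\[
\int_\Omega u\,\nabla(u+V)\cdot\nabla\varphi\,\dx = 0 \qquad\text{for every }\varphi\in\Xi,
\]
the choice $\varphi\in C_c^\infty(\Omega)$ giving the interior equation and the remaining test functions encoding, via the divergence theorem, the homogeneous flux condition on $\Gamma_N$. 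The core of the argument is then the identity obtained by multiplying \eqref{StationaryFDequation} by $u+V$ and integrating by parts,
\[
\int_\Omega u\,\big|\nabla(u+V)\big|^2\,\dx
= -\int_\Omega \big(\nabla\cdot[u\,\nabla(u+V)]\big)(u+V)\,\dx
+ \int_{\partial\Omega}\big(u\,\nabla(u+V)\cdot n_x\big)(u+V)\,\xd S,
\]
in which the bulk term on the right vanishes by \eqref{StationaryFDequation}, the part of the boundary integral over $\Gamma_N$ vanishes by the Neumann condition in \eqref{Stationarybc}, and the part over $\Gamma_D$ vanishes because the flux $u\,\nabla(u+V)$ has zero normal trace there, $u$ being null on $\Gamma_D$. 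Hence the left-hand side is zero.

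The conclusion is then immediate: since $u\ge 0$, the integrand $u\,|\nabla(u+V)|^2$ is non-negative and integrates to zero, so it vanishes a.e.\ in $\Omega$; therefore wherever $u>0$ one necessarily has $\nabla(u+V)=0$, while on the complementary set $u=0$. (As is customary for $H^1$ functions, the alternative is to be understood almost everywhere, and pointwise at every point at which $u$ and $u+V$ happen to be continuous.)

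The one genuinely technical point is the integration by parts above. Since $\Omega\subset\RR^2$, from $u\in H^1(\Omega)$ one only gets $u\in L^q(\Omega)$ for every $q<\infty$, so the flux $u\,\nabla(u+V)$ lies in $L^r(\Omega)$ for all $r<2$ but not, a priori, in $L^2(\Omega)$, and the $H(\mathrm{div})$-trace theorem does not apply verbatim; this is precisely where the two-dimensionality of the problem enters. I would make the argument rigorous by approximation within the admissible class $\Xi$: replace $u$ by the truncation $(u-\delta)^+\in\Xi$, which still vanishes near $\Gamma_D$ in the trace sense, and/or replace $V$ by smooth potentials $V_k\to V$ in $C^1(\overline\Omega)$, carry out the by-parts computation on these regularised quantities — where every boundary term is legitimate — and then let $\delta\to 0$ and $k\to\infty$, using once more $u|_{\Gamma_D}=0$ to annihilate the limiting boundary contribution on $\Gamma_D$. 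An equivalent route, available when $\Omega$ is simply connected, represents the divergence-free field $u\,\nabla(u+V)$ as $\nabla^{\perp}\psi$ for a stream function $\psi$ and reduces $\int_\Omega\nabla^{\perp}\psi\cdot\nabla(u+V)\,\dx$ to a boundary integral, which again vanishes by the cancellation on $\Gamma_N\cup\Gamma_D$.
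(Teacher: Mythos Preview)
Your proof is correct and follows exactly the same route as the paper: multiply \eqref{StationaryFDequation} by $u+V$, integrate by parts, and use the boundary conditions with $u_D=0$ to obtain $\int_\Omega u\,|\nabla(u+V)|^2\,\dx=0$, whence the dichotomy. The paper's argument is in fact a terse two-line version of yours that does not dwell on the integrability and trace subtleties you raise in your final paragraph.
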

\begin{proof}
We multiply Equation (\ref{StationaryFDequation}) by  $u+V$ 
and then integrate with respect to $x$ in $\Omega$.
After integrating by parts, we obtain that
$$
\int_\Omega u \left\vert \nabla \left(u +V \right) \right\vert^2 \dx =0,
$$
because of the boundary conditions (\ref{Stationarybc}) with $u_D = 0$. 
Since $u \geq 0$, by assumption, the statement follows. 
\end{proof}
\par
This simple result shows that, depending on $V$, the homogeneous stationary 
problem may have non-unique solution.
For example, if $V\leq 0$ with compact support $K \subset \Omega$, then both $u \equiv 0$
and the function
$$
  u(x) = \left\{ \begin{aligned}
  &0,&  &x \in \overline{\Omega} \setminus K,
  \\
  &-V(x),& &x \in K
 \end{aligned}
   \right.
$$
are non-negative solutions of (\ref{StationaryFDequation}). 
Moreover, if $\Gamma_D = \emptyset$, then $u = \gamma-V$ is a strictly positive solution  
for all constants $\gamma$ such that $\gamma > V(x)$ for all $x\in \overline\Omega$ 
(in particular, if $V$ is allowed to go to $+\infty$, no positive solution may exist at all).
\par

%%%%%%%%%%%%%%%%%%%%%%%%%%%%%%%%%%%%%%%%
\subsection{Reduction to a one-dimensional problem}
\label{Sec3.2}
%%%%%%%%%%%%%%%%%%%%%%%%%%%%%%%%%%%%%%%%
%
We now examine the stationary problem in a particular case, which is basically one-dimensional.
Let $\Omega = (0,1)\times(a,b) \subset \RR^2$, let
$$
  \Gamma_D = \{0, 1\}\times[a,b], \qquad \Gamma_N = [0,1] \times\{a,b\},
$$
and let $u = u(x_1,x_2)$, so that the Dirichlet boundary conditions read as follows:
$$
u(0,x_2)= u_0(x_2), \qquad
u(1,x_2)= u_1(x_2),\qquad x_2 \in [a,b], 
$$
and the Neumann conditions are imposed on the two other sides of the rectangle.
\par
Moreover, let us assume that 
$$
  \frac{\partial V}{\partial x_2} = \frac{\partial u_0}{\partial x_2} = \frac{\partial u_1}{\partial x_2} = 0,
$$ 
i.e.\ both the potential $V$ and the Dirichlet data $u_D$ only depend on the variable $x_1$.
Then, clearly, $u(x_1,x_2) = u(x_1)$ is a solution of the stationary problem provided that $u(x_1)$ solves 
the one-dimensional problem
\begin{equation}
\label{1Dstationary}
\begin{aligned}
  &\frac{d}{dx_1}\left[u(x_1)\frac{d}{dx_1}\left(u(x_1) + V(x_1) \right) \right] = 0, \quad x_1 \in [0,1],
  \\[6pt]
  &u(0) = u_0, \quad u(1) = u_1,
\end{aligned}
\end{equation}
where we recall that $V : [0,1] \to \RR$ is assumed to be continuously differentiable. 
We are going to prove that problem (\ref{1Dstationary}) has a unique positive solution for positive
Dirichlet data, $u_0 > 0$ and $u_1>0$, subject to suitable restrictions (see Theorem \ref{Theo1D}).
\par
For the sake of simplicity, from here to the end of this section, we shall denote with $x$ the one-dimensional variable $x_1$; 
then, recall that
$$
   x \equiv x_1.
$$
From (\ref{1Dstationary}) we immediately obtain that a constant $c\in\RR$ exists such that
\begin{equation}
\label{1DprodForm}
  u(x)\left[ u'(x) + V'(x)\right] = c, \qquad x \in [0,1]
\end{equation}
(where the derivatives are now denoted by apices). 
The case $c = 0$ corresponds to all situations in which either $u(x) = 0$ or $u'(x) = -V'(x)$, for some $x\in[0,1]$;
in such case, as discussed above, we cannot expect to have uniqueness.
Then, we assume $c\not=0$ and, consequently, we are forced to consider strictly positive Dirichlet data.
If $c\not=0$, then any regular solution of (\ref{1DprodForm}) cannot vanish in $[0,1]$ and then we can assume
$u$ to be strictly positive in $[0,1]$ and write the differential equation in the normal form
\begin{equation}
\label{1DodeForm}
  u'(x) = \frac{c}{u(x)} - V'(x).
\end{equation}
Let us first consider the case of constant $V'$, for which we have an explicit representation of the solution of the Cauchy
problem for the ODE (\ref{1DodeForm}).
We denote by 
$$
   W : [-1/e,+\infty) \to [-1,+\infty)
   \qquad\text{and}\qquad
   \tilde W : [e,+\infty) \to [1,+\infty), 
$$   
respectively, the inverse functions of $f(x) = x e^x$, and $g(x) =  e^x/x$ ($W$ is known as the 
{\em Lambert  W-function} \cite{NIST}).
It is readily proven that the functions $W$ and $\tilde W$ satisfy the differential equations
\begin{equation}
\label{derW}
   W'(y) = \frac{W(y)}{y\,\big(W(y)+1\big)}
   \qquad\text{and}\qquad
   \tilde W'(y) = \frac{\tilde W(y)}{y\,\big(\tilde W(y)-1\big)}.
\end{equation}
\begin{lemma}
\label{LemmaAlpha}
Let $u_0 >0$, $c>0$ and $\alpha \in \RR$.
Then, the solution of the Cauchy problem
\begin{equation}
\label{1Dalpha}
  u'(x) = \frac{c}{u(x)} - \alpha,
\qquad
  u(x_0) = u_0, 
\end{equation}
is given by $u(x) = \phi(x-x_0,u_0,c, \alpha)$, where
\begin{equation}
\label{phiDef}
 \phi(\Delta x,u_0,c, \alpha) =
\left\{ \begin{aligned}
  &\frac{c}{\alpha}\left\{1+W\left[\left( \frac{\alpha u_0}{c} - 1\right)
  e^{-\frac{\alpha^2}{c}\Delta x + \frac{\alpha u_0}{c} - 1} \right]
  \right\},&  &\text{if $\alpha >0$} ,
  \\[6pt]
  &\sqrt{u_0^2 + 2c\,\Delta x},& &\text{if $\alpha = 0$,}
  \\[6pt]
  &\frac{c}{\alpha}\left\{1- \tilde W\left[\left(1- \frac{\alpha u_0}{c}\right)^{-1}
  e^{\frac{\alpha^2}{c}\Delta x + 1- \frac{\alpha u_0}{c}}
   \right]\right\},& &\text{if $\alpha <0$.}
 \end{aligned}
   \right.
\end{equation}
Moreover, for fixed $x > x_0$, we have that $\phi(x,u_0,c, \alpha)$ is strictly increasing with respect to $u_0$ and $c$, 
and strictly decreasing with respect to $\alpha$.
\end{lemma}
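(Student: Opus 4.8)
The plan is to treat the three cases $\alpha>0$, $\alpha=0$, $\alpha<0$ of (\ref{1Dalpha}) separately, exhibiting the solution explicitly in each, and then to derive the monotonicity assertions by comparison of solutions rather than by differentiating (\ref{phiDef}). The case $\alpha=0$ is immediate: multiplying $u'=c/u$ by $2u$ gives $(u^2)'=2c$, hence $u(x)^2=u_0^2+2c(x-x_0)$, which for $c>0$ is positive for every $\Delta x\geq 0$ and yields the middle line of (\ref{phiDef}). In all three cases the right-hand side $u\mapsto c/u-\alpha$ of the ODE is $C^1$ on the half-line $u>0$, so the Cauchy problem has a unique maximal solution; moreover one checks a priori that this solution remains positive and exists for every $x\geq x_0$ (if $\alpha>0$ it is monotone and converges to the equilibrium $c/\alpha$; if $\alpha\leq 0$ it is nondecreasing with bounded derivative, hence grows at most linearly), so that ``the solution'' in the statement is unambiguous on $[x_0,\infty)$.

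For $\alpha\neq 0$ I would obtain the formula as follows. Shift to the deviation from the equilibrium level: for $\alpha>0$ put $u=(c/\alpha)(1+w)$, which turns (\ref{1DodeForm}) into the separable equation $w'=-(\alpha^2/c)\,w/(1+w)$; integrating $(1/w+1)\,dw=-(\alpha^2/c)\,dx$ gives $\ln|w|+w=-(\alpha^2/c)\,x+\mathrm{const}$, i.e.\ $we^{w}=w_0 e^{w_0}e^{-(\alpha^2/c)\Delta x}$ with $w_0=\alpha u_0/c-1$, the sign of $w$ being preserved along the trajectory since $w=0$ is an equilibrium. As $u>0$ forces $w>-1$, we may invert with the principal Lambert branch, $w=W(w_0 e^{w_0}e^{-(\alpha^2/c)\Delta x})$, and substituting back reproduces verbatim the first line of (\ref{phiDef}); for $\alpha<0$ the substitution $u=(c/\alpha)(1-\tilde w)$ leads in the same way to $e^{\tilde w}/\tilde w=(e^{\tilde w_0}/\tilde w_0)\,e^{(\alpha^2/c)\Delta x}$ with $\tilde w_0=1-\alpha u_0/c>1$, and inverting with $\tilde W$ gives the third line. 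Equivalently, and more quickly, one may simply verify that (\ref{phiDef}) solves (\ref{1Dalpha}): differentiating through $W$ (resp.\ $\tilde W$) with the chain rule and the identities (\ref{derW}) gives $\phi'=-\alpha\,W(h)/(W(h)+1)=c/\phi-\alpha$ where $h$ is the argument of $W$ (resp.\ the analogous identity with $\tilde W$), and the initial condition $\phi(0,u_0,c,\alpha)=u_0$ follows from $W(ye^{y})=y$ for $y\geq-1$ and $\tilde W(e^{y}/y)=y$ for $y\geq 1$. In either route one must check that the argument of $W$ stays in $[-1/e,\infty)$ and that of $\tilde W$ in $[e,\infty)$ for all $\Delta x\geq 0$, which is clear from the sign and the monotonicity of the exponential factor.

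For the monotonicity, fix $\Delta x=x-x_0>0$. If $u_0<\bar u_0$ with $c,\alpha$ unchanged, the two trajectories cannot meet (by uniqueness), so, being ordered at $x_0$, they stay ordered and $u(x)<\bar u(x)$; hence $\phi$ is strictly increasing in $u_0$. If $c<\bar c$ with $u_0,\alpha$ unchanged, the solution $\bar u$ with parameter $\bar c$ satisfies $\bar u'=\bar c/\bar u-\alpha>c/\bar u-\alpha$, i.e.\ it is a strict supersolution of the $c$-equation starting from the same value $u_0$; a first-contact argument then applies, for if $\bar u-u$ had a first zero $x_1>x_0$ after $x_0$ its derivative there would be $\leq 0$, whereas $(\bar u-u)'(x_1)=\bar c/\bar u(x_1)-c/u(x_1)=(\bar c-c)/u(x_1)>0$ --- a contradiction; thus $\bar u(x)>u(x)$ and $\phi$ is strictly increasing in $c$. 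The dependence on $\alpha$ is handled identically: if $\alpha<\bar\alpha$ then the $\alpha$-solution is a strict supersolution of the $\bar\alpha$-equation, so $\phi$ is strictly decreasing in $\alpha$.

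The only genuinely delicate part is the bookkeeping in the second paragraph --- keeping track of signs and of the factors $\alpha u_0/c-1$, $(1-\alpha u_0/c)^{-1}$ and the exponents so that the integrated expression is literally (\ref{phiDef}), together with the checks that the Lambert arguments never leave their domains; all of this reduces to the elementary monotonicity of $t\mapsto te^{t}$ on $[-1,\infty)$ and of $t\mapsto e^{t}/t$ on $[1,\infty)$. Everything else (existence, uniqueness, the comparison estimates) is standard ODE theory on $u>0$, where the nonlinearity is smooth.
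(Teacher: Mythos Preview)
Your proof is correct. The derivation of the explicit formulas via the substitutions $u=(c/\alpha)(1+w)$ and $u=(c/\alpha)(1-\tilde w)$ is carried out cleanly, and your checks that the arguments of $W$ and $\tilde W$ remain in their domains are the right ones. One small point: in the first-contact argument for $c$ (and for $\alpha$), since the two solutions coincide at $x_0$, you should first remark that $(\bar u-u)'(x_0)=(\bar c-c)/u_0>0$, so that $\bar u-u>0$ immediately to the right of $x_0$; only then does ``first zero after $x_0$'' make sense and force a nonpositive derivative there. This is routine and does not affect the validity of the argument.

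The route differs from the paper's for the monotonicity part. The paper verifies~(\ref{phiDef}) by a one-line appeal to~(\ref{derW}) and then, rather than comparing solutions, differentiates the flow in the parameter: setting $\psi=\partial\phi/\partial c$, it derives the linear variational equation $\partial_x\psi=-c\psi/\phi^2+1/\phi$ with $\psi(0)=0$ and argues by contradiction that $\psi>0$ on $(0,1]$; the case of $\alpha$ is declared analogous, and monotonicity in $u_0$ is not spelled out. Your comparison/supersolution argument is slightly more elementary in that it never needs to assert (or justify) $C^1$ dependence of $\phi$ on the parameters; the paper's variational-equation approach, on the other hand, yields an explicit differential inequality for $\partial\phi/\partial c$, which is convenient later when the paper needs quantitative lower bounds on that derivative (cf.\ Lemma~\ref{LemmaMonotonia}).
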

\begin{proof}
The fact that $\phi$ satisfies \eqref{1Dalpha} comes straightforwardly from \eqref{derW}.
Then we just prove the growth properties with respect to $\alpha$ and $c$.
Although they could be checked directly on expression \eqref{phiDef}, it is easier to 
use the differential equation satisfied by $\phi$, i.e.
\begin{equation}
\label{auxnew1}
  \frac{\partial \phi}{\partial x} = \frac{c}{\phi} - \alpha.
\end{equation}
Defining $\psi = {\partial \phi}/{\partial c}$ we obtain
\begin{equation}
\label{auxnew2}
   \frac{\partial \psi}{\partial x} = -\frac{c\,\psi}{\phi^2} + \frac{1}{\phi}
\end{equation}
and, since $\phi_{|x=0} = u_0$ for all $\alpha \in \RR$, we also have 
\begin{equation}
\label{auxnew3}
\psi_{|x=0} = 0.
\end{equation}
If $x_0\in(0,1]$ existed such that $\psi \leq 0$ in $(0,x_0)$, then from \eqref{auxnew2} one would obtain
$$
  \frac{\partial \psi}{\partial x} \geq \frac{1}{\phi} > 0.
$$
in $(0,x_0)$ (where the fact that $\phi > 0$ can be easily checked from \eqref{phiDef}) which, together with \eqref{auxnew3},
would give $\psi > 0$ in $(0,x_0)$.
Then $\psi > 0$ in $(0,1]$, which of course implies that $\phi$ is strictly increasing with respect to $c>0$.
\par \noindent
The proof that $\phi$ is strictly decreasing with respect to $\alpha \in \RR$ can be carried out analogously.
\end{proof}
\par \smallskip\noindent
In addition to the properties listed in Lemma \ref{LemmaAlpha} let us also note 
that $\phi(\Delta x,u_0,c,\alpha)$, as $\Delta x$ increases from $0$ to $+\infty$:
\begin{enumerate}
\item[(i)]
increases monotonically from $u_0$ to $+\infty$, if $\alpha \leq 0$;
\item[(ii)]
increases monotonically from $u_0$ to the asymptotic value $c/\alpha$, if $\alpha > 0$ and  $u_0 < c/\alpha$;
\item[(iii)]
is identically equal to $c/\alpha$, if $\alpha > 0$ and  $u_0 = c/\alpha$;
\item[(iv)]
decreases monotonically from $u_0$ to the asymptotic value $c/\alpha$, if $\alpha > 0$ and  $u_0 > c/\alpha$.
\end{enumerate}
Lemma \ref{LemmaAlpha} will allow us to prove properties of the more general differential equation (\ref{1DodeForm})
by approximating $V'(x)$ with piecewise constant functions.
\begin{lemma}
\label{LemmaPW}
Let $x_0 = 0 < x_1 < \cdots < x_n = 1$ be a partition of $[0,1]$ and let $V'_n(x)$ be a piecewise constant
function, taking the value $\alpha_i$ in the interval $(x_i, x_{i+1})$, such that $V'_n \to V'$ uniformly in $[0,1]$,
as $n\to +\infty$.
Let $u(x)$ be the solution of the Cauchy problem
\begin{equation}
\label{1DCP}
  u'(x) = \frac{c}{u(x)} - V'(x), \quad x \in [0,1], \qquad u(0) = u_0 >0,
\end{equation}
with $c>0$, and let $v_n(x)$ be defined by
$$
\begin{aligned}
  &v_n(x) = \phi(x,u_0,c,\alpha_0),& \ &x \in [0,x_1],
\\[4pt]
  &v_n(x) = \phi(x-x_i,v_n(x_i),c,\alpha_i),& \ &x \in (x_i,x_{i+1}],
  \quad i = 1,2,\ldots,n-1
  \end{aligned}
$$
(in other words, $v_n$ solves $v_n'(x) = \frac{c}{v_n(x)} - \alpha_i$  in $[x_i,x_{i+1}]$,
taking as initial value in each interval the final value of the preceding interval, starting with $u_0$ in the first interval).
Then $v_n \to u$ uniformly in $[0,1]$, as $n\to +\infty$. 
\end{lemma}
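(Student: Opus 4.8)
The plan is to obtain a positive lower bound for $u$ and for all the $v_n$ on $[0,1]$ that is \emph{uniform in $n$}, and then to compare the two families of ODEs by a Gronwall argument, using that the nonlinearity $z\mapsto c/z$ is Lipschitz as soon as $z$ stays away from $0$. Let $A := \|V'\|_{L^\infty(0,1)} \vee \sup_n\|V_n'\|_{L^\infty(0,1)}$, which is finite because $V_n'\to V'$ uniformly; note that $|\alpha_i|\le A$ for all $i,n$. The basic observation is a barrier estimate: if $z$ is a positive solution of $z' = c/z - \beta$ with $|\beta|\le A$, then wherever $z<c/A$ one has $z' = c/z - \beta > A - \beta \ge 0$, so $z$ is strictly increasing on $\{z<c/A\}$ and, in particular, cannot cross the level $c/A$ downwards. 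Applied to $u$ (with $\beta = V'(x)$), this shows $u(x)\ge m := \min\{u_0,c/A\}>0$ as long as $u$ exists; since then $|u'|\le c/m + A$ is bounded, $u$ is in fact defined on all of $[0,1]$ and stays $\ge m$ there.

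I would then apply the same barrier estimate on each subinterval $[x_i,x_{i+1}]$, where $v_n$ solves $v_n'=c/v_n-\alpha_i$ with $|\alpha_i|\le A$: it gives $v_n(x)\ge \min\{v_n(x_i),c/A\}$ on that interval. Starting from $v_n(0)=u_0$ and inducting over $i$, this yields $v_n(x)\ge m$ for all $x\in[0,1]$ and all $n$ --- the same $m$ as for $u$, and crucially independent of the partition. (Equivalently, this lower bound can be read off from the monotonicity alternatives (i)--(iv) listed right after Lemma \ref{LemmaAlpha}, together with Lemma \ref{LemmaAlpha} itself.) Because $m>0$, the map $z\mapsto c/z$ is Lipschitz with constant $c/m^2$ on $[m,+\infty)$, which is all the regularity of the nonlinearity we shall need.

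Finally, set $w_n := u - v_n$ and $\e_n := \|V'-V_n'\|_{L^\infty(0,1)}$, so that $\e_n\to0$. On each open interval $(x_i,x_{i+1})$ we have
\begin{equation*}
 w_n'(x) = \left(\frac{c}{u(x)}-\frac{c}{v_n(x)}\right) - \bigl(V'(x)-\alpha_i\bigr),
\end{equation*}
and since $u,v_n\ge m$, $\left|\frac{c}{u}-\frac{c}{v_n}\right| = \frac{c\,|w_n|}{u\,v_n}\le \frac{c}{m^2}|w_n|$; hence $|w_n'(x)|\le \frac{c}{m^2}|w_n(x)|+\e_n$ for a.e.\ $x\in[0,1]$. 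As $w_n$ is continuous with $w_n(0)=0$, integrating and applying Gronwall's inequality gives $\|u-v_n\|_{L^\infty(0,1)}\le \e_n\,e^{c/m^2}\to0$ as $n\to+\infty$, which is the assertion.

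The delicate point is the very first one: one must be sure that the positive lower bound on $v_n$ is genuinely uniform in $n$, i.e.\ that the piecewise-constant drift cannot push $v_n$ toward $0$ no matter how the partition is chosen. This is exactly what the barrier argument (equivalently, the monotonicity list (i)--(iv)) secures, and it is here that the hypothesis $c>0$ enters in an essential way; once a common positive lower bound is in hand, the convergence $v_n\to u$ is a routine continuous-dependence estimate.
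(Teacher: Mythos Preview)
Your argument is correct and follows essentially the same route as the paper's proof: both establish a uniform positive lower bound $m=\min\{u_0,c/A\}$ for $u$ and for all the $v_n$ by the barrier observation that the solution can decrease only where it exceeds $c/\alpha$, and then use the resulting Lipschitz bound on $z\mapsto c/z$ together with Gronwall. The only cosmetic difference is that the paper works with the integral equations and the error $|V-V_n|$, while you work directly with the differential form and the error $\|V'-V_n'\|_\infty$; your treatment of the uniform lower bound for $v_n$ (by induction over the subintervals) is in fact more explicit than the paper's.
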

\begin{proof}
Note that $u$ and $v_n$ are continuous solutions of the integral equations,
$$
  u(x) = u_0 + \int_0^x \frac{c}{u(y)}\,dy - V(x)+V(0)
$$
and
$$
  v_n(x) = u_0 + \int_0^x \frac{c}{v_n(y)}\,dy - V_n(x)+V(0),
$$
respectively (where, of course, $V_n(x) := V(0) + \int_0^x V_n'(y)\,dy$ is a piecewise linear approximation of $V(x)$).
Then,
$$
  \vert u(x)-v_n(x)\vert \leq  c\int_0^x \frac{ \vert u(y)-v_n(y)\vert}{\vert u(y)\, v_n(y) \vert}\,dy + \vert V(x) - V_n(x) \vert.
$$
We note that $u(x)$ can be decreasing only in the set  
$$
  \{x \in [0,1] \mid V'(x) > 0\ \text{and}\ u(x) > c/V'(x)\}.
$$
Since  $u(0) = u_0 >0$, we have that
$$
  u(x) > \min\{u_0, c/V'_+ \}, 
$$ 
where $V'_+$ denotes the maximum positive value of $V'(x)$ in $[0,1]$.
Similar considerations hold for $v_n$ (with a lower bound that can be supposed to be independent on $n$).
Then, $\epsilon >0$ exists such that $u(x)v_n(x) \geq \epsilon$ for all $x\in[0,1]$ and, therefore,
$$
  \vert u(x)-v_n(x)\vert \leq  \frac{c}{\epsilon} \int_0^x \vert u(y)-v_n(y)\vert\,dy + \vert V(x) - V_n(x) \vert.
$$
Since $V_n(x) \to V(x)$ uniformly in $[0,1]$, the thesis follows from Gronwall's lemma.
\end{proof}
\begin{lemma}
\label{LemmaBounds}
Let $u$ as in the previous lemma and let
$$
  \alpha_M = \max_{0\leq x \leq 1} V'(x),
  \qquad
  \alpha_m = \min_{0\leq x \leq 1} V'(x).
$$
Then,
\begin{equation}
\label{ubound}
  \underline v(x) \leq u(x) \leq \overline v(x), \qquad x \in [0,1],
\end{equation}
where
$$
  \underline v(x) = \phi(x,u_0,c,\alpha_M),
  \qquad
   \overline v(x) = \phi(x,u_0,c,\alpha_m).
$$
\end{lemma}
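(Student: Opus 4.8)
The plan is to deduce the two bounds from the monotonicity of $\phi$ with respect to its last argument (established in Lemma \ref{LemmaAlpha}) together with the piecewise-constant approximation of Lemma \ref{LemmaPW}. Fix a partition $x_0 = 0 < x_1 < \cdots < x_n = 1$ and choose the piecewise constant function $V'_n$ so that its value $\alpha_i$ on $(x_i,x_{i+1})$ lies in $[\alpha_m,\alpha_M]$ (for instance $\alpha_i = V'(\xi_i)$ for some point $\xi_i$ of the interval); by refining the partition we can arrange $V'_n \to V'$ uniformly, so that, by Lemma \ref{LemmaPW}, the associated functions $v_n$ converge uniformly to $u$ on $[0,1]$.

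First I would show, by induction over the subintervals, that $\underline v(x) \leq v_n(x) \leq \overline v(x)$ on all of $[0,1]$. On $[0,x_1]$ this is immediate: $v_n(x) = \phi(x,u_0,c,\alpha_0)$ and, since $\alpha_m \leq \alpha_0 \leq \alpha_M$ and $\phi$ is strictly decreasing in $\alpha$, we get $\phi(x,u_0,c,\alpha_M) \leq v_n(x) \leq \phi(x,u_0,c,\alpha_m)$. For the inductive step, assume $\underline v(x_i) \leq v_n(x_i) \leq \overline v(x_i)$. Using the semigroup property of $\phi$ (i.e.\ uniqueness for the autonomous ODE $u' = c/u - \alpha$ on the region $u>0$), on $(x_i,x_{i+1}]$ one may write $\overline v(x) = \phi(x-x_i,\overline v(x_i),c,\alpha_m)$ and $v_n(x) = \phi(x-x_i,v_n(x_i),c,\alpha_i)$. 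Since $\phi$ is increasing in $u_0$ and decreasing in $\alpha$, and $v_n(x_i)\leq \overline v(x_i)$, $\alpha_m\leq\alpha_i$, it follows that $v_n(x) \leq \phi(x-x_i,\overline v(x_i),c,\alpha_i) \leq \phi(x-x_i,\overline v(x_i),c,\alpha_m) = \overline v(x)$; the lower bound $v_n(x)\geq\underline v(x)$ is obtained in exactly the same way. This closes the induction and gives the inequalities for $v_n$ on the whole of $[0,1]$.

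Finally, letting $n\to +\infty$ and invoking the uniform convergence $v_n\to u$ from Lemma \ref{LemmaPW}, the inequalities pass to the limit and yield \eqref{ubound}. The only point that deserves some care is the semigroup identity $\phi(\Delta x_1+\Delta x_2,u_0,c,\alpha) = \phi(\Delta x_2,\phi(\Delta x_1,u_0,c,\alpha),c,\alpha)$ used to restart $\overline v$ and $\underline v$ at the partition nodes, but this is just uniqueness for $u' = c/u-\alpha$, whose right-hand side is locally Lipschitz on $\{u>0\}$. (Alternatively, one could bypass the approximation altogether: from $\alpha_m \leq V'(x) \leq \alpha_M$ one has $c/u - \alpha_M \leq u' \leq c/u - \alpha_m$, so $u$ is a supersolution of $w' = c/w-\alpha_M$ and a subsolution of $w' = c/w-\alpha_m$, and the ODE comparison principle gives \eqref{ubound} directly, since $u$ and the two barriers stay bounded away from $0$ exactly as in the proof of Lemma \ref{LemmaPW}.)
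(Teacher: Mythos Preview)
Your proof is correct and follows essentially the same approach as the paper's: approximate $u$ by the piecewise solutions $v_n$, use the monotonicity of $\phi$ in both $u_0$ and $\alpha$ to propagate the bounds $\underline v \leq v_n \leq \overline v$ inductively across the subintervals, and pass to the limit. You are in fact a bit more careful than the paper in spelling out the semigroup identity for $\phi$ needed to restart $\overline v$ and $\underline v$ at each node; the paper uses this implicitly. Your parenthetical alternative via the ODE comparison principle is also valid and arguably more direct.
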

\begin{proof}
Let $V'_n$ and $v_n$ be as in the previous lemma, and assume, without loss of generality, that 
$\alpha_m \leq V'_n(x) \leq \alpha_M$ for all $x\in[0,1]$.
In the first interval, $[0,x_1]$, we have $\underline v(x) \leq v_n(x) \leq \overline v(x)$,
because 
$$
  \phi(x,u_0,c,\alpha_M) \leq \phi(x,u_0,c,\alpha_0) \leq \phi(x,u_0,c,\alpha_m),
$$
as $\phi$ is increasing for decreasing $\alpha$ (Lemma \ref{LemmaAlpha}).
Then, in each of the successive intervals we still have $\underline v(x) \leq v_n(x) \leq \overline v(x)$,
because, {\em a fortiori},
$$
  \phi(x,\underline v(x_i),c,\alpha_M) \leq \phi(x,v_n(x_i),c,\alpha_0) \leq \phi(x, \overline v(x_i),c,\alpha_m),
$$
being $\phi$ also increasing with respect to the initial value.
Thus $\underline v(x) \leq v_n(x) \leq \overline v(x)$ in the whole interval $[0,1]$. 
Since, from Lemma \ref{LemmaPW}, $v_n \to u$ uniformly in $[0,1]$ the inequalities are also true for $u$,
which proves our claim.
\end{proof}
Note that the result of Lemma \ref{LemmaBounds} implies, in particular, that $u(x) > 0$ for all $x \in [0,1]$.
\begin{lemma}
\label{LemmaMonotonia}
Let us denote by $u(x,c)$, with $x\in [0,1]$ and $c>0$, the solution of the Cauchy problem $(\ref{1DCP})$.
Then, $u(x,c)$ is strictly increasing with respect to $c$, for every fixed $x \in (0,1]$.
\end{lemma}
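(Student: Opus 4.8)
The plan is to follow the route already used in the proof of Lemma~\ref{LemmaAlpha}: rather than working with the implicit solution $u(x,c)$ of \eqref{1DCP} directly, I would differentiate the Cauchy problem with respect to the parameter $c$ and show that the sensitivity function $w(x,c):=\partial u/\partial c$ is strictly positive on $(0,1]$. First I would record that, for each $c>0$, the solution $u(\cdot,c)$ is defined on all of $[0,1]$ and stays uniformly bounded away from $0$ there: this is exactly Lemma~\ref{LemmaBounds}, which traps $u(\cdot,c)$ between $\phi(\cdot,u_0,c,\alpha_M)$ and $\phi(\cdot,u_0,c,\alpha_m)$, both finite and strictly positive on $[0,1]$. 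Hence the right-hand side $F(x,u,c)=c/u-V'(x)$ of \eqref{1DCP} is continuous in $x$ and $C^1$ in $(u,c)$ on the region actually visited by the trajectory, and the classical theorem on differentiable dependence on parameters guarantees that $w$ exists, is continuous on $[0,1]$, and solves
\[
  \partial_x w = -\frac{c}{u(x,c)^2}\,w + \frac{1}{u(x,c)}, \qquad w_{|x=0}=0,
\]
i.e.\ the exact analogue of \eqref{auxnew2}--\eqref{auxnew3} with $\phi$ replaced by $u$, the initial value being $0$ because $u(0,c)=u_0$ does not depend on $c$.

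Then I would reproduce the contradiction argument of Lemma~\ref{LemmaAlpha}: since $w_{|x=0}=0$ and $\partial_x w_{|x=0}=1/u_0>0$, $w$ is strictly positive immediately to the right of $0$; if $w$ vanished somewhere in $(0,1]$, then at the first such point $x^\star$ (which is $>0$) continuity would give $w(x^\star,c)=0$ and $w>0$ on $(0,x^\star)$, whereas the linearised equation would give $\partial_x w(x^\star,c)=1/u(x^\star,c)>0$, which is incompatible with $w$ decreasing down to $0$ at $x^\star$. Hence $w(x,c)>0$ for every $x\in(0,1]$ and $c>0$, and for $0<c_1<c_2$ and fixed $x\in(0,1]$ we conclude
\[
  u(x,c_2)-u(x,c_1)=\int_{c_1}^{c_2} w(x,c)\,dc>0,
\]
which is the asserted strict monotonicity.

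The only step requiring some care is the preliminary one — ensuring that $u(\cdot,c)$ lives on all of $[0,1]$, is bounded below by a positive constant, and depends smoothly on $c$; once Lemma~\ref{LemmaBounds} supplies the uniform positivity, the rest is routine ODE theory. If one prefers to avoid invoking the differentiable-dependence theorem (the potential is only assumed $C^1$, so $V'$ is merely continuous), the very same contradiction argument can be applied directly to the difference $D(x):=u(x,c_2)-u(x,c_1)$, which satisfies $D(0)=0$, $D'(0)=(c_2-c_1)/u_0>0$, and, at any point where $D(x)=0$, $D'(x)=c_2/u(x,c_2)-c_1/u(x,c_1)=(c_2-c_1)/u(x,c_2)>0$; this forces $D>0$ on $(0,1]$ without any differentiability in $c$.
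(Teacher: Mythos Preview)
Your argument is correct, and it is genuinely different from the paper's.  The paper does \emph{not} differentiate \eqref{1DCP} in $c$ directly; instead it returns to the piecewise-constant approximation of Lemma~\ref{LemmaPW}, proves (with a somewhat delicate computation using \eqref{derW} and de~l'H\^opital) that $\partial\phi/\partial c$ is continuous in $\alpha$ across $\alpha=0$, extracts from this a uniform lower bound $\mu>0$ for $\partial\phi/\partial c$ over $\alpha\in[\alpha_m,\alpha_M]$, parlays that into $v_n(x,c+\Delta c)-v_n(x,c)\ge\mu\,\Delta c$, and only then passes to the limit $n\to\infty$.  Your route bypasses all of this machinery: once Lemma~\ref{LemmaBounds} guarantees that $u(\cdot,c)$ stays in a compact set bounded away from $0$, the variational equation for $w=\partial u/\partial c$ (or, even more cleanly, your difference argument on $D=u(\cdot,c_2)-u(\cdot,c_1)$, which needs nothing beyond continuity of $V'$) yields the strict monotonicity by exactly the same first-zero contradiction already used inside Lemma~\ref{LemmaAlpha}.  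Your approach is shorter and avoids the limiting step; the paper's approach has the minor advantage of producing a quantitative lower bound $\mu$ on the $c$-growth, though this bound is not used anywhere else.
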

\begin{proof}
We resort again to the uniformly approximating sequence defined in Lemma \ref{LemmaPW}, that we now denote 
by $v_n(x,c)$ in order to stress the dependence on $c$. 
We can assume, as in Lemma \ref{LemmaBounds}, that $\alpha_m \leq V'_n(x) \leq \alpha_M$ (independently on $n$).
\par \noindent
From definition \eqref{phiDef} it is apparent that  $\phi(\Delta x, u_0,c,\alpha)$ is a $C^1$-function of $c > 0$, piecewise continuous
with respect to $\alpha \in \RR$. 
However, if we prove that $\frac{\partial\phi}{\partial c}$ is actually continuous with respect to $\alpha$ also at $\alpha = 0$, then
$\frac{\partial\phi}{\partial c}$ has a (positive) lower bound when $\alpha$ varies in $[\alpha_m,\alpha_M]$ and, therefore,
for any given $\Delta x \in (0,1]$,  $u_0>0$ and $c>0$, a constant $\mu > 0$ exist such that
\begin{equation}
\label{muaux}
  \frac{\partial}{\partial c} \phi(\Delta x, u_0,c,\alpha_i)  \geq \mu, 
  \quad 
  \text{for all $i = 0,1,\ldots,n-1$.}
\end{equation}
We now prove the continuity of $\frac{\partial\phi}{\partial c}$ (and of $\phi$ as well) as $\alpha \to 0^+$.
For $\alpha > 0$ the expression of $\phi$ (which is now seen as a function of $c$ and $\alpha$) is the first one in \eqref{phiDef}, i.e.
$$
  \phi(c,\alpha) = \frac{c}{\alpha} \left\{ 1 + W[g(c,\alpha)] \right\},
$$
where 
$$
  g(c,\alpha) = \left( \frac{\alpha u_0}{c} - 1\right) e^{-\frac{\alpha^2}{c}\Delta x + \frac{\alpha u_0}{c} - 1}.
$$
By using \eqref{derW} we easily get
$$
 \frac{\partial}{\partial \alpha} \{1 + W[g(c,\alpha)]\}^2 = \frac{2\alpha W[g(c,\alpha)]  (u_0^2 + 2c\Delta x - 2\alpha u_0 \Delta x)}%
 {\alpha c u_0 - c^2}. 
$$
Since $W[g(c,\alpha)] \to -1$ as $\alpha \to 0^+$, by applying de l'H\^opital's theorem we obtain
$$  
\lim_{\alpha \to 0^+} \frac{c^2}{\alpha^2} \{1 + W[g(c,\alpha)]\}^2 = u_0^2 + 2c\Delta x
$$
so that
\begin{equation}
\label{muaux2}
  \lim_{\alpha \to 0^+} \phi(c,\alpha) =  \lim_{\alpha \to 0^+}\frac{c}{\alpha} \{1 + W[g(c,\alpha)]\} = 
  \sqrt{u_0^2 + 2c\Delta x},
\end{equation}
which, according to \eqref{phiDef}, proves the continuity of $\phi$ for $\alpha \to 0^+$.
\par \noindent
Coming to the $c$-derivative, after straightforward calculations, we have (for $\alpha > 0$)
$$
   \frac{\partial \phi}{\partial c} (c,\alpha) = \frac{1}{\alpha} \{1 + W[g(c,\alpha)]\}
   + \frac{\alpha W[g(c,\alpha)](\alpha u_0 \Delta x - c\Delta x - u_0^2)}{c \{1 + W[g(c,\alpha)]\} (\alpha u_0 - c)}.
$$
By using \eqref{muaux2} we obtain therefore
$$
 \lim_{\alpha \to 0^+}  \frac{\partial \phi}{\partial \alpha} (c,\alpha) = \frac{\Delta x}{\sqrt{u_0^2 + 2c\Delta x}}
  = \frac{\partial}{\partial c} \sqrt{u_0^2 + 2c\Delta x},
$$
which proves the continuity of $\frac{\partial\phi}{\partial c}$ as $\alpha \to 0^+$.
\par \noindent
The continuity of $\frac{\partial\phi}{\partial c}$ (and $\phi$) for $\alpha \to 0^-$ is proven in the same way by using the expression 
of $\phi$ for $\alpha < 0$ in \eqref{phiDef}.
This, according to the discussion above, proves  \eqref{muaux}.
We stress the fact that  $\mu$ only depends on $\alpha_m$ and $\alpha_M$, and does not depend on the 
sequence of the $\alpha_i$'s.
Then, from the definition of $v_n$ (see Lemma \ref{LemmaPW}) we have that 
$$
  v_n(x,c+\Delta c) - v_n(x,c) \geq  \mu \Delta c,
$$
for all $x \in (0,1]$ and $\Delta c \geq 0$ (small enough), with $\mu$ independent on $n$.
Then, passing to the limit for $n\to\infty$ we obtain that $u(x,c)$ is strictly increasing with respect to 
$c$ for every $0<x\leq 1$.
\end{proof}
\begin{lemma}
\label{LemmaCauchy}
Referring to Lemma \ref{LemmaMonotonia} for the notations, we have that $u(x,c)$ converges uniformly 
to a continuous limit $u(x,0)$ as $c\to 0^+$.
\end{lemma}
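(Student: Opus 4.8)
The plan is to show that the family $\{u(\cdot,c)\}_{0<c\le c_0}$ is uniformly bounded and uniformly Lipschitz on $[0,1]$ for some small $c_0>0$, to identify the pointwise limit of $u(x,c)$ as $c\to 0^+$ using the monotonicity in $c$ established in Lemma~\ref{LemmaMonotonia}, and then to promote pointwise to uniform convergence by a standard Arzel\`a--Ascoli/subsequence argument. The continuity of $u(\cdot,0)$ then follows at once, being a uniform limit of continuous functions.

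First I would obtain a lower bound on $u(x,c)$ that does not degrade faster than linearly in $c$. By Lemma~\ref{LemmaBounds}, $u(x,c)\ge\phi(x,u_0,c,\alpha_M)$ with $\alpha_M=\max_{[0,1]}V'$, and from properties (i)--(iv) listed right after Lemma~\ref{LemmaAlpha} one sees that $\phi(\Delta x,u_0,c,\alpha_M)\ge\min\{u_0,c/\alpha_M\}$ for all $\Delta x\ge 0$ when $\alpha_M>0$, and $\phi(\Delta x,u_0,c,\alpha_M)\ge u_0$ when $\alpha_M\le 0$. Taking $c_0:=\min\{1,\alpha_M u_0\}$ if $\alpha_M>0$ and $c_0:=1$ otherwise, it follows that
$$
  0\le \frac{c}{u(x,c)} \le C := \max\Big\{\alpha_M,\tfrac1{u_0}\Big\}
  \qquad\text{for all }x\in[0,1],\ c\in(0,c_0],
$$
a bound depending only on $u_0$ and $\alpha_M$. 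Feeding this into the integral form of \eqref{1DCP}, namely $u(x,c)=u_0+\int_0^x\big(c/u(y,c)-V'(y)\big)\,dy$, gives $|u(x,c)-u(x',c)|\le L\,|x-x'|$ with $L:=C+\max_{[0,1]}|V'|$, uniformly in $c\in(0,c_0]$, as well as $\sup_{[0,1]}u(\cdot,c)\le u_0+L$.

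Next I would record that, by Lemma~\ref{LemmaMonotonia}, $c\mapsto u(x,c)$ is nondecreasing for each $x\in(0,1]$ (and equals $u_0$ at $x=0$) and is bounded below by $0$ by the remark following Lemma~\ref{LemmaBounds}; hence $u(x,0):=\lim_{c\to0^+}u(x,c)$ exists for every $x\in[0,1]$. To conclude, take any sequence $c_n\to0^+$: by the previous step $\{u(\cdot,c_n)\}$ is equibounded and equicontinuous, hence precompact in $C([0,1])$ by Arzel\`a--Ascoli, and any uniformly convergent subsequence must have $u(\cdot,0)$ as its limit, since it also converges pointwise and the pointwise limit is $u(\cdot,0)$. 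By the usual subsequence principle $u(\cdot,c_n)\to u(\cdot,0)$ uniformly, and as $c_n$ was arbitrary, $u(\cdot,c)\to u(\cdot,0)$ uniformly as $c\to0^+$.

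The hard part is the first step: a priori one cannot exclude that $u(x,c)\to0$ too fast at some point as $c\to0^+$, which would let $c/u(x,c)$ blow up and destroy the equicontinuity. The point is that the lower comparison function $\phi(\cdot,u_0,c,\alpha_M)$ of Lemma~\ref{LemmaAlpha} stays above its asymptotic level $c/\alpha_M$, so $c/u(x,c)$ remains bounded uniformly in $c$ small; everything after that is routine.
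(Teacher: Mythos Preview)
Your argument is correct and takes a genuinely different route from the paper's proof.

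The paper proceeds by first computing the explicit limit $\lim_{c\to 0^+}\phi(\Delta x,u_0,c,\alpha)=\max\{u_0-\alpha\Delta x,0\}$ (uniformly on compact parameter ranges), which passes to the piecewise-constant approximants $v_n$ of Lemma~\ref{LemmaPW}: one obtains $v_n(\cdot,c)\to v_n(\cdot,0)$ uniformly in $x$ and $n$, hence $c\mapsto v_n(\cdot,c)$ is uniformly Cauchy in $n$, and therefore so is $c\mapsto u(\cdot,c)$. The uniform limit $u(\cdot,0)$ is then continuous.

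Your approach bypasses both the explicit formula for $\phi$ and the approximating sequence $v_n$: from the qualitative monotonicity properties (i)--(iv) of $\phi$ and Lemma~\ref{LemmaBounds} you extract the key bound $c/u(x,c)\le C$ uniformly for small $c$, which immediately yields a uniform Lipschitz constant for the family $\{u(\cdot,c)\}$. Together with the monotonicity in $c$ from Lemma~\ref{LemmaMonotonia} (which pins down the pointwise limit), Arzel\`a--Ascoli finishes the job. This is shorter and more self-contained. What the paper's route buys, by contrast, is that the piecewise-linear limits $v_n(\cdot,0)$ are already computed along the way, which directly motivates the function $U$ of Definition~\ref{DefiU} and makes the identification $u(\cdot,0)=U$ in Lemma~\ref{Lemmac0} almost immediate; with your approach that identification still requires the separate argument given there.
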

\begin{proof}
It is not difficult to show that
\begin{equation}
\label{phiC0}
\lim_{c\to 0^+} \phi(\Delta x,u_0,c,\alpha) = \max \left\{u_0 - \alpha x, 0 \right\},
\end{equation}
uniformly with respect to $\Delta x$, $u_0$ and $\alpha$. 
Then,  still assuming $\alpha_m \leq V'_n(x) \leq \alpha_M$, we obtain that $\lim_{c\to 0^+}v_n(x,c) = v_n(x,0)$, 
uniformly with respect to $x$ and $n$, where  $v_n(x,0)$ is a continuous, piecewise linear, limit function.
Since $c \mapsto v_n(x,c)$ (seen as a sequence in $c$) is uniformly Cauchy with respect to $x$ and $n$, 
it is straightforward to prove that  $c \mapsto u(x,c)$ is in turn uniformly Cauchy (with respect to $x$) and, 
therefore, converges uniformly to a continuous limit $u(x,0)$.
\end{proof}
\begin{definition}
\label{DefiU}
To any given data $(u_0,V)$ of the Cauchy problem $(\ref{1DCP})$ we associate the set of points
$$
  1 < x_1 \leq y_1  < x_2  \leq y_2 < \cdots < x_n = y_n  =1 
$$
by means of the following recursive rule (where $V_0 = V(0)$):
\begin{enumerate}
\item[1.]
we start by putting
$$
  x_1 = \sup \big\{ x \in [0,1] \ \big\vert \ u_0 + V_0 - V(\xi) > 0 \ \forall\ \xi \in [0,x)  \big\};
$$
if $x_1 = 1$, then we put $y_1 = 1$ and the procedure ends with $n=1$, otherwise we proceed to step 2.
\item[2.]
If $x_1 < 1$ we put
$$
  y_1 = \sup \big\{ x \in [x_1,1] \ \big\vert \ -V'(\xi) \leq 0\ \forall\ \xi \in [x_1,x) \big\};
$$
if $y_1 = 1$, then the procedure ends with $n=1$, otherwise we proceed to step 3.
\item[3.]
If $y_{i-1} < 1$ we put
$$
  x_i = \sup \big\{ x \in (y_{i-1},1] \ \big\vert \ V(y_{i-1}) - V(\xi) > 0 \ \forall\ \xi \in (y_{i-1},x)  \big\};
$$
if $x_i = 1$, then we put $y_i = 1$ and the procedure ends with $n=i$, otherwise we proceed to step 4.
\item[4.]
If $x_i < 1$ we put
$$
  y_i = \sup \big\{ x \in [x_i,1] \ \big\vert \ -V'(\xi) \leq 0\ \forall\ \xi \in [x_i,x) \big\};
$$
if $y_i = 1$, then the procedure ends with $n=i$, otherwise we increment the index $i$ and repeat steps 3 and 4
until we find $x_i = 1$ or $y_i = 1$ for some $i$.
(For the sake of simplicity we can assume that $V$ changes sign a finite number of times,
so that the procedure stops at a finite $n$.)
\end{enumerate}
Finally, we define the continuous function
\begin{equation}
\label{Udef}
U(x) = \left\{
\begin{aligned}
 &u_0 + V_0 - V(x),& &x\in [0,x_1]
\\
&0,& &x \in [x_i,y_i],
\\
&V(y_i) - V(x),& &x\in [y_i,x_{i+1}].&  
\end{aligned}
\right.
\end{equation}
\end{definition}
\begin{lemma}[Asymptotic behaviour for $c\to 0^+$]
\label{Lemmac0}
Let $u$ be the solution of the Cauchy problem $(\ref{1DCP})$. 
Then, 
$$
  \lim_{c\to 0^+} u(x) = U(x),
$$ 
where $U$ is the function defined in Definition  \ref{DefiU}.
\end{lemma}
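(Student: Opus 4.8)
The plan is to identify the limit function $u(\cdot,0)$, whose existence and continuity are already granted by Lemma~\ref{LemmaCauchy}, as the solution of a one-sided reflection (Skorokhod) problem driven by $F(x) := u_0 + V_0 - V(x)$, and then to check that this solution is exactly the function $U$ of Definition~\ref{DefiU}. Integrating \eqref{1DCP} once gives the representation
\[
  u(x,c) = \bigl(u_0 + V_0 - V(x)\bigr) + L_c(x), \qquad L_c(x) := c\int_0^x \frac{\xd y}{u(y,c)},
\]
where $L_c$ is nonnegative, nondecreasing in $x$, continuous, and vanishes at $x=0$. By Lemma~\ref{LemmaMonotonia}, $u(x,c)$ is nondecreasing in $c$, hence so is $L_c(x) = u(x,c) - F(x)$; since moreover $u(\cdot,c)\to u(\cdot,0)$ uniformly (Lemma~\ref{LemmaCauchy}) and $F$ does not depend on $c$, we get $L_c \to L_0 := u(\cdot,0) - F$ uniformly, with $L_0$ continuous, nonnegative, nondecreasing, $L_0(0)=0$, and $u(\cdot,0) = F + L_0 \ge 0$ (the sign also following from $u(x,c)>0$, cf.\ Lemma~\ref{LemmaBounds}).

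The key step is to show that $L_0$ is constant on every open interval where $u(\cdot,0)>0$. Indeed, if $u(x_*,0)>0$, then by continuity $u(\cdot,0)\ge\kappa>0$ on some interval $[x_*-\delta,x_*+\delta]$, hence $u(\cdot,c)\ge\kappa$ there for every $c$ by the monotonicity in $c$, so that $L_c(x_*+\delta) - L_c(x_*-\delta) = c\int_{x_*-\delta}^{x_*+\delta}\xd y/u(y,c) \le 2\delta c/\kappa \to 0$ as $c\to 0^+$, which forces $L_0$ to be constant on $[x_*-\delta,x_*+\delta]$. Consequently the pair $(u(\cdot,0),L_0)$ solves the reflection problem: $u(\cdot,0)=F+L_0\ge 0$ on $[0,1]$, $L_0$ is continuous and nondecreasing with $L_0(0)=0$, and $L_0$ is constant on each open interval where $u(\cdot,0)>0$. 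This problem has a unique solution, namely $L_0(x) = \max\{0,\ \max_{0\le s\le x}(V(s)-V_0) - u_0\}$: for one inequality, $L_0(x) \ge L_0(s) \ge -F(s)$ for $s\le x$ together with $L_0\ge 0$; for the reverse, on a maximal subinterval of $[0,x_0]$ on which $L_0$ would strictly exceed the claimed expression one has $u(\cdot,0) = F+L_0 > 0$, hence $L_0$ constant there, contradicting maximality. Substituting back yields
\[
  u(x,0) = \max\Bigl\{\, u_0 + V_0 - V(x),\ \max_{0\le s\le x}\bigl(V(s)-V(x)\bigr)\,\Bigr\}, \qquad x\in[0,1].
\]

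It remains to verify that this closed form coincides with the function $U$ of \eqref{Udef}. Using the defining properties of the points $x_i,y_i$ in Definition~\ref{DefiU} — namely $V(\xi) < u_0+V_0$ for $\xi\in[0,x_1)$, $V$ nondecreasing on each $[x_i,y_i]$, $V < V(y_i)$ on $(y_i,x_{i+1})$, and $V(x_{i+1}) = V(y_i)$ with $V(x_1)=u_0+V_0$ — one checks, by an induction on $i$, that $\max_{0\le s\le x}V(s)$ equals $u_0+V_0$ for $x\in[0,x_1]$, equals $V(x)$ for $x\in[x_i,y_i]$, and equals $V(y_i)$ for $x\in[y_i,x_{i+1}]$; inserting these values into the closed form returns precisely the three branches of \eqref{Udef} (also using that $V(x)\ge u_0+V_0$ on $\bigcup_i[x_i,y_i]$ and $V(y_i)\ge u_0+V_0$ to see which argument of the maximum dominates). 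I expect the main obstacle to lie in this last piece of bookkeeping — in particular in controlling the running maximum of $V$ along the recursively defined intervals, where the finite-oscillation hypothesis on $V$ underlying Definition~\ref{DefiU} is needed — rather than in the analytic core, which is entirely supplied by the monotonicity and uniform-convergence statements of Lemmas~\ref{LemmaMonotonia}--\ref{LemmaCauchy}. An alternative route would be to pass to the limit $c\to 0^+$ in the piecewise-constant approximants $v_n$ of Lemma~\ref{LemmaPW} via \eqref{phiC0}, obtaining the reflected piecewise-linear functions attached to $V_n\to V$, and then to let $n\to\infty$; this, however, requires a two-parameter limit with uniformity in both $n$ and $c$ and does not seem shorter.
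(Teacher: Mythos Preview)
Your argument is correct and takes a genuinely different route from the paper's. The paper proceeds interval by interval, tracking the recursive structure of Definition~\ref{DefiU} directly: on each $[0,x_1-\epsilon]$ (resp.\ $[y_i+\epsilon,x_{i+1}-\epsilon]$) it uses the crude bound $u(x,c)>F(x)\ge\eta>0$ to get $u-F\le cx/\eta\to 0$, while on each $[x_i,y_i]$ it invokes the upper barrier of Lemma~\ref{LemmaBounds} with $\alpha_m\ge 0$ to force $u(x,c)\le\sqrt{u(x_i,c)^2+2c(x-x_i)}\to 0$; continuity of the limit (Lemma~\ref{LemmaCauchy}) glues the pieces together at the endpoints. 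Your approach instead identifies $u(\cdot,0)$ globally as the Skorokhod reflection of $F(x)=u_0+V_0-V(x)$ at zero, via the decomposition $u(x,c)=F(x)+L_c(x)$ and the observation that the local time $L_c$ can grow in the limit only where $u(\cdot,0)=0$. This buys you the closed formula $u(x,0)=\max\{F(x),\,\max_{s\le x}(V(s)-V(x))\}$, which is more explicit than the piecewise description \eqref{Udef} and in principle does not need the finite-oscillation hypothesis on $V$; the price is the bookkeeping step you flag, matching the running maximum of $V$ against the recursive points $x_i,y_i$. Two small notes: on $[0,x_1]$ one has $\max_{s\le x}V(s)\le u_0+V_0$ (with equality only at $x_1$ when $x_1<1$), not equality throughout, but this is exactly what is needed for the first branch of the maximum to dominate; and the equalities $V(x_1)=u_0+V_0$, $V(x_{i+1})=V(y_i)$ that you use hold only when $x_1<1$, resp.\ $x_{i+1}<1$, which is precisely the case where the next interval exists.
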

\begin{proof}
We divide the proof into three recursive steps.
\par \noindent
{\em Step 1.} 
Let $\epsilon>$ be arbitrarily small and let $x \in [0,x_1-\epsilon]$.
From Definition \ref{DefiU} we have that $\eta>0$ exists such that $u_0 + V_0 - V(\xi) \geq \eta$ for all $\xi \in[0,x]$.
Then, from the obvious inequality
$$
u(x) =  u_0 +V_0 - V(x) + \int_0^x \frac{c}{u(\xi)}\,d\xi  > u_0 +V_0 - V(x),
$$
we obtain
$$
  u(x) -  u_0 - V_0 + V(x) \leq \frac{cx}{\eta}, \qquad x \in [0,x_1-\epsilon],
$$
which proves that $\lim_{c\to 0^+} u(x) = U(x)$, for all $x \in [0,x_1)$. 
Since the limit has to be a continuous function (Lemma \ref{LemmaCauchy}), from the arbitrariness 
of $\epsilon$ we also have
$$
  \lim_{c\to 0^+} u(x_1) = U(x_1) = 0.
$$
{\em Step 2.}
In the interval $[x_1,y_1]$ we have that $\lim_{c\to 0^+} u(x_1) = 0$ (from the previous step)
and, by definition, $\alpha_m = \min_{x\in[x_1,y_1]} V'(x) \geq 0$. 
Then, from Lemma \ref{LemmaBounds} 
$$
  \lim_{c\to 0^+}u(x) \leq  \lim_{c\to 0^+} \sqrt{u(x_1) + 2c(x-x_1)} =0,
$$
for all $x\in [x_1,y_1]$.
\par \noindent
{\em Step 3.}
In the subsequent intervals of the form $[y_{i},x_{i+1}]$ or $[x_i,y_i]$, we repeat the same proofs of the steps 1 and 2
(respectively), with the only difference that $U(y_i) = 0$ instead of $U(0) = u_0 > 0$ (which implies that Step 1 has to be
slightly modified by using a $\eta >0$ such that $V(y_i) - V(\xi) \geq \eta$ for all $\xi \in [y_i+\epsilon, x_{i+1}-\epsilon]$).
 \end{proof}
\par
Note that the limiting process $c\to 0^+$ selects one particular solution among the infinitely many 
solutions of the case $c=0$, for which we have seen that there is not uniqueness (see section \ref{S3.1}). 
\begin{definition}
\label{DefiCritical}
We define the critical values for the Dirichlet data $u_0>0$ and $u_1 >0$ as follows:
\begin{equation}
  u_0^\mathrm{crit} = V_M - V_0, 
 \qquad\quad
  u_1^\mathrm{crit} =  V_M - V_1, 
\end{equation}
where we put $V_0 = V(0)$, $V_1 = V(1)$ and
$$
  V_M =  \max_{x\in[0,1]} V(x).
$$
If a boundary term is greater than the corresponding critical value, then it is said to be {\it supercritical}, otherwise it is said to be {\it subcritical}.
\end{definition}
\par
We can finally prove that the solution of  the  the Dirichlet problem (\ref{1Dstationary}) exists and is unique,
provided that the Dirichlet data $u_0 >0$  and $u_1 >0$ are not both subcritical.
\begin{theorem}
\label{Theo1D}
If $u_0 >  u_0^\mathrm{crit}$, then the Dirichlet problem $(\ref{1Dstationary})$ has a unique, strictly positive, 
solution $u$ for all $u_1 >0$.
\\
If $u_1 >  u_1^\mathrm{crit}$, then the Dirichlet problem $(\ref{1Dstationary})$ has a unique, strictly positive, 
solution $u$ for all $u_0 >0$.
\end{theorem}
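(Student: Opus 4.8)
The plan is to turn the Dirichlet problem \eqref{1Dstationary} into a one‑parameter shooting problem in the constant $c$ of the first integral \eqref{1DprodForm}. For $c>0$ let $u(\,\cdot\,;c)$ be the solution of the Cauchy problem \eqref{1DCP} with $u(0)=u_0$ — it exists and is strictly positive on $[0,1]$ by Lemma \ref{LemmaBounds} — and put $\Phi(c):=u(1;c)$. I would first record that $\Phi$ is strictly increasing on $(0,+\infty)$ (Lemma \ref{LemmaMonotonia}), continuous (continuous dependence on the parameter $c$, the solutions remaining in a fixed compact subset of $(0,+\infty)$ when $c$ runs over a compact set, by the bounds of Lemma \ref{LemmaBounds}), and has limits $\lim_{c\to 0^+}\Phi(c)=U(1)$ (Lemma \ref{Lemmac0}) and $\lim_{c\to+\infty}\Phi(c)=+\infty$ (since $\Phi(c)\ge\phi(1,u_0,c,\alpha_M)$ and $\phi(1,u_0,c,\alpha)\to+\infty$ as $c\to+\infty$, as one reads off \eqref{phiDef}). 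Hence $\Phi$ maps $(0,+\infty)$ bijectively onto $(U(1),+\infty)$. Under the hypothesis $u_0>u_0^{\mathrm{crit}}$, step 1 of Definition \ref{DefiU} gives $x_1=1$, so $U(1)=u_0+V_0-V_1$, which in addition is $>V_M-V_1\ge 0$. Therefore every datum $u_1>u_0+V_0-V_1$ is attained by a unique $c>0$, while $u_1=u_0+V_0-V_1$ is attained by the explicit $c=0$ solution $u(x)=u_0+V_0-V(x)$; this solution is strictly positive and, since any strictly positive solution of \eqref{1Dstationary} with $c=0$ must satisfy $u'=-V'$, it is the only positive solution for that datum.

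It remains to cover $0<u_1<u_0+V_0-V_1$, and this is the step I expect to be delicate. Here I would exploit the reflection $x\mapsto 1-x$: if $\bar V(x):=V(1-x)$, then $u$ is a strictly positive solution of \eqref{1Dstationary} with constant $c<0$ if and only if $\bar u(x):=u(1-x)$ is a strictly positive solution of the same problem for the potential $\bar V$, with $u_0$ and $u_1$ interchanged and with constant $\bar c=-c>0$. Applying the analysis of the previous paragraph (up to the bijectivity statement, which does not use the condition $u_0>u_0^{\mathrm{crit}}$) to $\bar V$ and left datum $u_1$, the map $\bar c\mapsto\bar\Phi(\bar c)$ — the value at $x=1$ of the $\bar V$‑Cauchy solution started at $u_1$ — is a strictly increasing bijection of $(0,+\infty)$ onto $(\bar U(1),+\infty)$, where $\bar U$ is the function of Definition \ref{DefiU} attached to the data $(u_1,\bar V)$. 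Consequently a (necessarily unique) solution with $c<0$ and $u(1)=u_1$ exists precisely when $u_0>\bar U(1)$.

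The heart of the matter is therefore the inequality $\bar U(1)<u_0$ for every $u_1\in(0,u_0+V_0-V_1)$. Noting that $\bar V_0=V_1$, $\bar V_1=V_0$, $\bar V_M=V_M$, and that the critical value of the left datum $u_1$ for the potential $\bar V$ equals $u_1^{\mathrm{crit}}=V_M-V_1$, I would distinguish two cases. If $u_1>u_1^{\mathrm{crit}}$, then step 1 of Definition \ref{DefiU} applied to $(u_1,\bar V)$ gives $\bar x_1=1$ and hence $\bar U(1)=u_1+V_1-V_0$, which is $<u_0$ exactly because $u_1<u_0+V_0-V_1$. If $u_1\le u_1^{\mathrm{crit}}$, then each of the three possible forms of the piece of \eqref{Udef} containing $x=1$ yields $\bar U(1)\le\bar V_M-\bar V_1=V_M-V_0=u_0^{\mathrm{crit}}<u_0$. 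The same bookkeeping shows that the three families of solutions — those with $c<0$ (covering exactly $(0,u_0+V_0-V_1)$), the one with $c=0$, and those with $c>0$ (covering $(u_0+V_0-V_1,+\infty)$) — are pairwise incompatible, so that each $u_1>0$ is attained by exactly one value of $c$ and thus, since every strictly positive solution of \eqref{1Dstationary} is a Cauchy solution for its own $c$, by exactly one strictly positive solution. This proves the first assertion. The second assertion, valid when $u_1>u_1^{\mathrm{crit}}$, is precisely the first one applied to the reflected problem for $\bar V$, whose left datum is $u_1$ and whose associated critical value is $u_1^{\mathrm{crit}}$.
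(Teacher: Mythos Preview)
Your proposal is correct and follows essentially the same route as the paper: a shooting argument in the first-integral constant $c$, using Lemma~\ref{LemmaMonotonia} for strict monotonicity, Lemma~\ref{LemmaBounds} for the limit $c\to+\infty$, Lemma~\ref{Lemmac0} for the limit $c\to 0^+$, and the reflection $x\mapsto 1-x$ to reduce $c<0$ to $c>0$ for the backward problem. Your case analysis for $\bar U(1)$ (splitting on $u_1\gtrless u_1^{\mathrm{crit}}$) is just a repackaging of the paper's three-case discussion of $\lim_{c\to 0^-}\tilde u(1)$; if anything, you are more explicit than the paper about the continuity of $\Phi$, about the $c=0$ boundary case, and about the disjointness of the $u_1$-ranges covered by $c<0$, $c=0$, and $c>0$, which is what actually secures uniqueness.
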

\begin{proof}
Consider first the case $u_0 >  u_0^\mathrm{crit}$ and assume, temporarily, that $c>0$.
\par
We know that the mapping $c \mapsto u(1)$, obtained by solving the Cauchy problem (\ref{1DCP}) is 
strictly monotone. Moreover, from Lemma \ref{LemmaBounds}, we have that 
$$
  \lim_{c\to+\infty} u(1) \geq  \lim_{c\to+\infty} \underline v(1)  = +\infty, 
 $$
since it is easy to see that $\phi(x,u_0,c,\alpha)\to+\infty$ as $c \to +\infty$, for all $x>0$ and $\alpha \in \RR$.
Moreover, since $u_0 > u_0^\mathrm{crit}$, from Definition \ref{DefiU} and Lemma \ref{Lemmac0} we have that
$$
   \lim_{c\to 0^+} u(x) = U(x) = u_0 + V_0 - V(x) > 0,
$$
i.e., in particular,
$$
   \lim_{c\to 0^+} u(1) = u_0 - \Delta V > 0, \quad \text{where \quad $\Delta V = V_1 - V_0$}.
$$
What we have proven so far is that the left-Dirichlet datum $u_0 >  u_0^\mathrm{crit}$ can be uniquely linked to 
any right-Dirichlet datum $u_1 = u(1) \in [u_0 - \Delta V, +\infty)$, by solving the Cauchy problem (\ref{1DCP}) with 
a suitable $c \geq 0$.
To go below the threshold  $u_0 - \Delta V$ we have to consider negative values of $c$.  
However, the case $c < 0$ can always be recast into the case $c >0$ by noticing that, if $u$ satisfies
$u'(x) = c/u(x) - V'(x)$, then $\tilde u (x) = u(1-x)$ satisfies 
$$
  \tilde u'(x) = -\frac{c}{\tilde u(x)} - \tilde V '(x), \quad \text{where \quad $\tilde V(x) = V(1-x)$}.
$$
In other words, the forward Cauchy problem with $c<0$ is equivalent to the backward Cauchy problem with $c>0$.
Hence, if we fix the right-Dirichlet datum $0 < u_1 < u_0 - \Delta V$ and take $c<0$, we can consider the 
Cauchy problem
\begin{equation}
\label{1DCPbw}
  \tilde u'(x) = \frac{\vert c \vert}{\tilde u(x)} - \tilde V'(x), \quad x \in [0,1], \qquad \tilde u(0) = u_1,
\end{equation}
that is the backward Cauchy problem for $u$ with right-Cauchy datum $u_1$. 
What we want to prove is that $u_1$ can be linked to $u_0$ with a suitable choice of $c<0$. i.e., that $c<0$ exists
such that  $ \tilde u(1) = u(0) = u_0$.
Since the datum $u_1$ is not necessarily supercritical for problem (\ref{1DCPbw}), then in the limit $c\to 0^-$ 
we have in general that $\tilde u$ tends to the asymptotic solution $\tilde U (x)$ (with the obvious definition) and, 
according to the theory we already know, for $\tilde u (1)$ there are three possibilities:
$$
  \lim_{c\to 0^-} \tilde u(1) = 
  \left\{
  \begin{aligned}
  &u_1 - \Delta \tilde V,&  &\text{(that is $u_1 + \Delta V$),}
  \\
  &0,
  \\
  &V (y) - V(0),& &\text{for some point $y \in (0,1)$.}
  \end{aligned}
  \right.
$$
In the first case (corresponding to $u_1$ supercritical for problem (\ref{1DCPbw}) we have 
$ \lim_{c\to 0^-}  \tilde u(1) = u_1 + \Delta V < u_0$ (since we have assumed  $u_1 < u_0 - \Delta V$).
In the second case, obviously, $ \lim_{c\to 0^-}  \tilde u(1) = 0 < u_0$ and in the third case 
$$
  u_0 > -V_0 + V(y) =  \lim_{c\to 0^-}  \tilde u(1),
$$
(because $u_0$ is supercritical).
In each of the three cases, therefore, 
$$
  \lim_{c\to 0^-}  u(0) =  \lim_{c\to 0^-}  \tilde u(1) < u_0
$$
and then (owing to the monotonic growth with respect to $\vert c \vert$), $u_0 = u(0)$ for a suitable $c<0$.
In conclusion, we have shown that the left-Dirichlet datum $u_0 >  u_0^\mathrm{crit}$ can be uniquely linked to 
any right-Dirichlet datum $u_1 = u(1)>0$, by solving the Cauchy problem (\ref{1DCP}) with 
a suitable $c \in \RR$. 
\par
The proof of the second part of the claim, i.e.\ the case $u_1 >  u_1^\mathrm{crit}$, is completely equivalent
to the proof of the first part since, as we have just shown, it suffices to change $c$ into $-c$.
\end{proof}
%
%%%%%%%%%%%%%%%%%%%%%%%%%%%%%%%%%%%%%%%%
\section{Numerical simulations}
\label{Sec4}
%%%%%%%%%%%%%%%%%%%%%%%%%%%%%%%%%%%%%%%%
%
Let $x=(x_1,x_2)$ and let $\Omega$ the square in $\RR^2$ defined as
$$
\Omega=\{ x\in \RR^2 \, : \, 0\leq x_1 \leq 1 \textrm{ and } 0\leq x_2 \leq 1 \}.
$$
We denote moreover 
$$
\partial\Omega:=\Gamma=\sum_{i=1}^5\Gamma_i,
$$
where
$$
\begin{array}{ll}
\Gamma_1&:=\{ x\in \RR^2 \, : \, 0 < x_1 \leq 1 \textrm{ and } x_2=0 \}; \\
\Gamma_2&:=\{ x\in \RR^2 \, : \, x_1 = 1 \textrm{ and } 0 < x_2 \leq 1 \}; \\
\Gamma_3&:=\{ x\in \RR^2 \, : \, 0 \leq x_1 < 1 \textrm{ and } x_2=1 \}; \\ 
\Gamma_4&:=\{ x\in \RR^2 \, : \, x_1 = 0 \textrm{ and } 1/2\leq x_2 < 1 \};  \\
\Gamma_5&:=\{ x\in \RR^2 \, : \, x_1 = 0 \textrm{ and } 0\leq x_2 < 1/2 \} \\
\end{array}
$$
and consider the unknowns $u\, : \, \RR^+\times\Omega
\to \RR$ and $J\, : \, \RR^+\times\Omega
\to \RR^2$.

We now introduce, for $n=0,\dots, N$, a semi-discrete weak formulation of Equation (\ref{FDequation}) for the
semi-discrete density unknowns $u^n(x)=u(n\Delta t, x)$ and the semi-discrete flux $J^n(x)=J(n\Delta t, x)$, 
where $\Delta t>0$ is the time step.
\par
Let $\phi \in H^1(\Omega)$ and $\psi\in H^1(\Omega)\times H^1(\Omega)$. 
We approximate the weak formulation of Equation (\ref{FDequation}) by means
of the following coupled system
\begin{equation}
\left\{
\label{weak}
\begin{aligned}
 &\int_\Omega \frac{1}{\Delta t} (u^n-\max\{ u^{n-1}, 0\})\phi\, dx 
+\int_{\partial\Omega} \phi J^n\cdot n_x\, dS
-\int_\Omega J^n\cdot\nabla \phi\, dx =0,
\\[6pt]
 &\int_\Omega J^n\cdot\psi\, dx + \frac{1}{2\pi}\int_\Omega
\max\{ u^{n-1}, 0\} [\nabla u^n\cdot \psi]\, dx
+ \int_\Omega u^{n}[\nabla V\cdot \psi]\, dx = 0,
\end{aligned}
\right. 
\end{equation}
where $n_x$ is the outward normal with respect to $\Omega$ starting from a point
$x\in\partial\Omega$.
\par
The boundary term (i.e.\ the integral on $\partial\Omega$ in the formulation written above) 
will be treated in agreement with the different boundary conditions specified for each numerical simulation.
\par
The ``positive part'' term in the weak formulation (\ref{weak}) is pleonastic at the continuous level, 
since the solution of the problem is known to be non negative by Theorem \ref{exist}.
At the discrete level, this strategy helps in controlling the non-negativity of the numerical solution.
\par
The numerical experiments aim at showing some peculiar properties
of Equation (\ref{FDequation}) under different choices of the potential $V$
and of the boundary conditions, and have been obtained by using the Finite Element Method.
\par
From the semi-discrete formulation, by using quadratic $P_2$ Lagrangian elements on a triangular mesh, 
we obtain a linear system whose size is given by twice the number of vertices and the number of mid-edges
in the triangulation. The system is solved by a multi-frontal Gauss LU factorization.
\par
The simulations are written in \texttt{FreeFem++}. 
The mesh discretization used in our simulations is composed by 19514 triangles, with 9940 vertices. 
\subsection{Long-time behavior of the solution in the one-dimensional case}
\label{Subsec:0}
We consider here the boundary conditions
\begin{equation}
\label{1Dcond}
J|_{\Gamma_1}=J|_{\Gamma_3}=0,\quad u|_{\Gamma_2}=u_1,\qquad 
u|_{\Gamma_4\cup \Gamma_5}=u_0.
\end{equation}
and assume that both the initial datum and the potential only depend on $x_1$. 
Then, as already discussed in section \ref{Sec3.2} in the stationary case,  the two-dimensional problem reduces to the one-dimensional 
problem
\begin{equation}
\label{1Devolutive}
\begin{aligned}
  &\frac{\partial u}{\partial t} =  \frac{\partial}{\partial x_1}\left[u \frac{\partial}{\partial x_1}\left(u + V \right) \right] = 0, 
  \quad x_1 \in [0,1],\ t > 0,
  \\[6pt]
  &u(0,t) = u_0, \quad u(1,t) = u_1,\quad u(x_1,0) = u_\mathrm{in}(x_1),
\end{aligned}
\end{equation}
where the initial datum, $u_\mathrm{in}(x_1)$, and the (constant) Dirichlet data, $u_0$ and $u_1$, are positive.
In this subsection we report a set of numerical simulations showing that the solution to Equation (\ref{1Devolutive}) 
tends asymptotically to the stationary solution discussed in section \ref{Sec3.2} (see, in particular, Theorem \ref{Theo1D}).
In Figure \ref{fig:0} the spatial profile of the solution along the direction $x_1$ (recall that the solution is homogeneous in
the direction $x_2$) is shown at different instants of time.
This first set of simulations has been performed with the potential
\begin{equation}
\label{Vsin}
  V(x_1) =  \sin(2\pi x_1).
\end{equation}
Note that, according to Definition \ref{DefiCritical}, for such potential we have 
$$
u_0^\mathrm{crit} = u_1^\mathrm{crit} = 1.
$$
The initial datum $u_\mathrm{in}(x_1)$ is chosen as a linear function interpolating the values $u_0$ and $u_1$
and the evolution of such datum towards the asymptotic, stationary solution (dotted black curve) is illustrated for different
choices of $u_0$ and $u_1$.
\begin{figure}[h!]
\begin{center}
(a)\includegraphics[height=4.2cm]{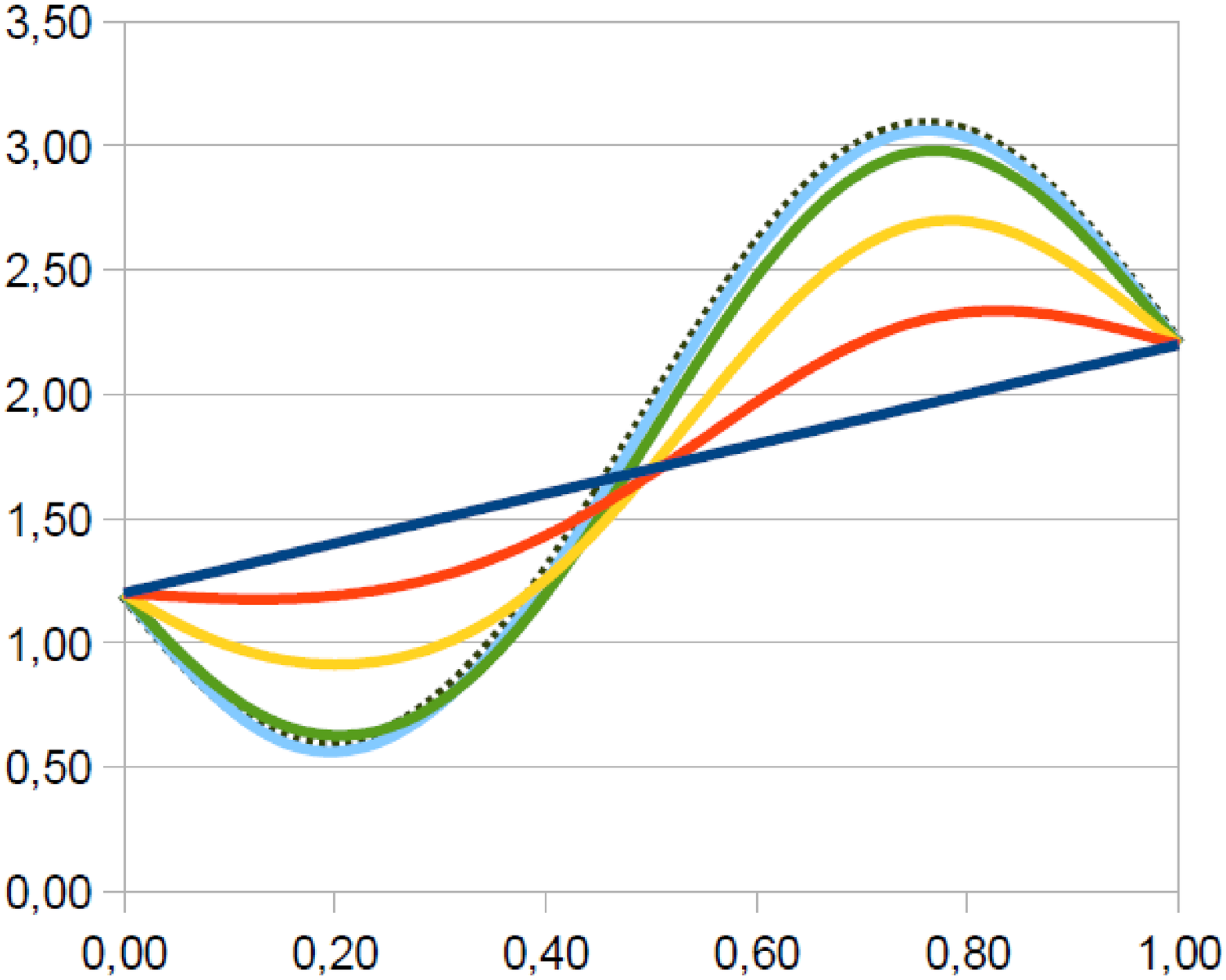}\
(b) \includegraphics[height=4.2cm]{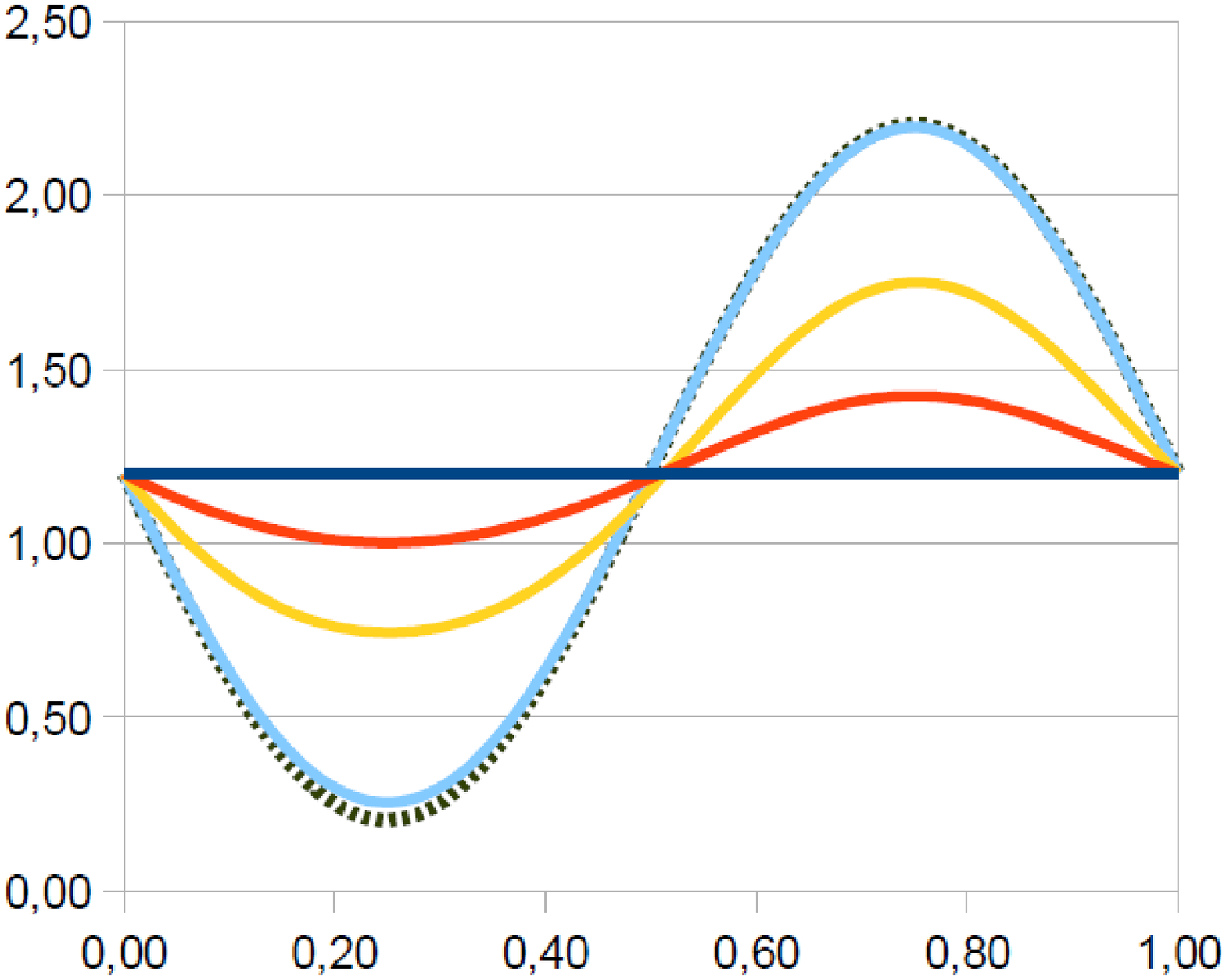}\\
(c) \includegraphics[height=4.2cm]{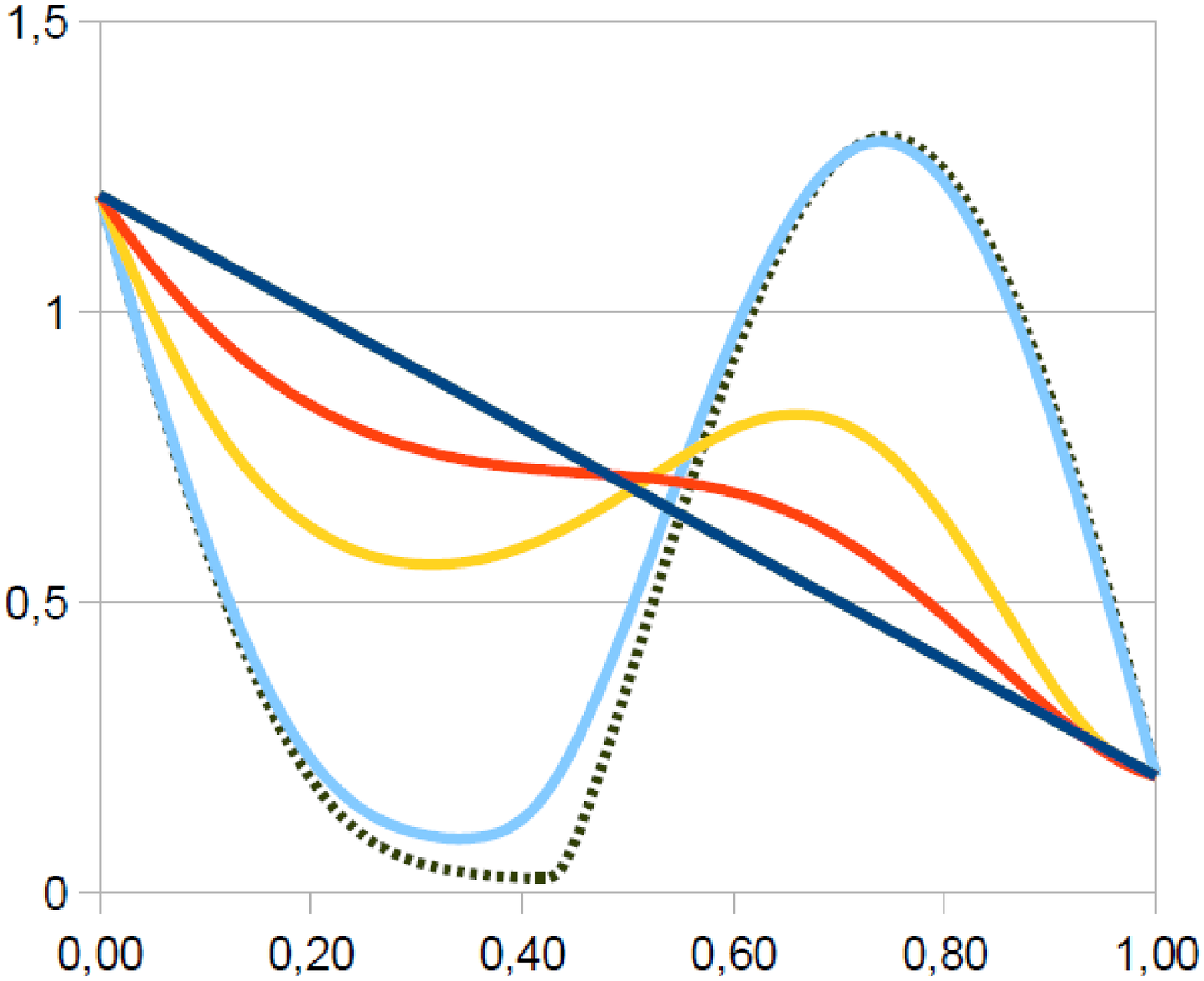}\
(d) \includegraphics[height=4.2cm]{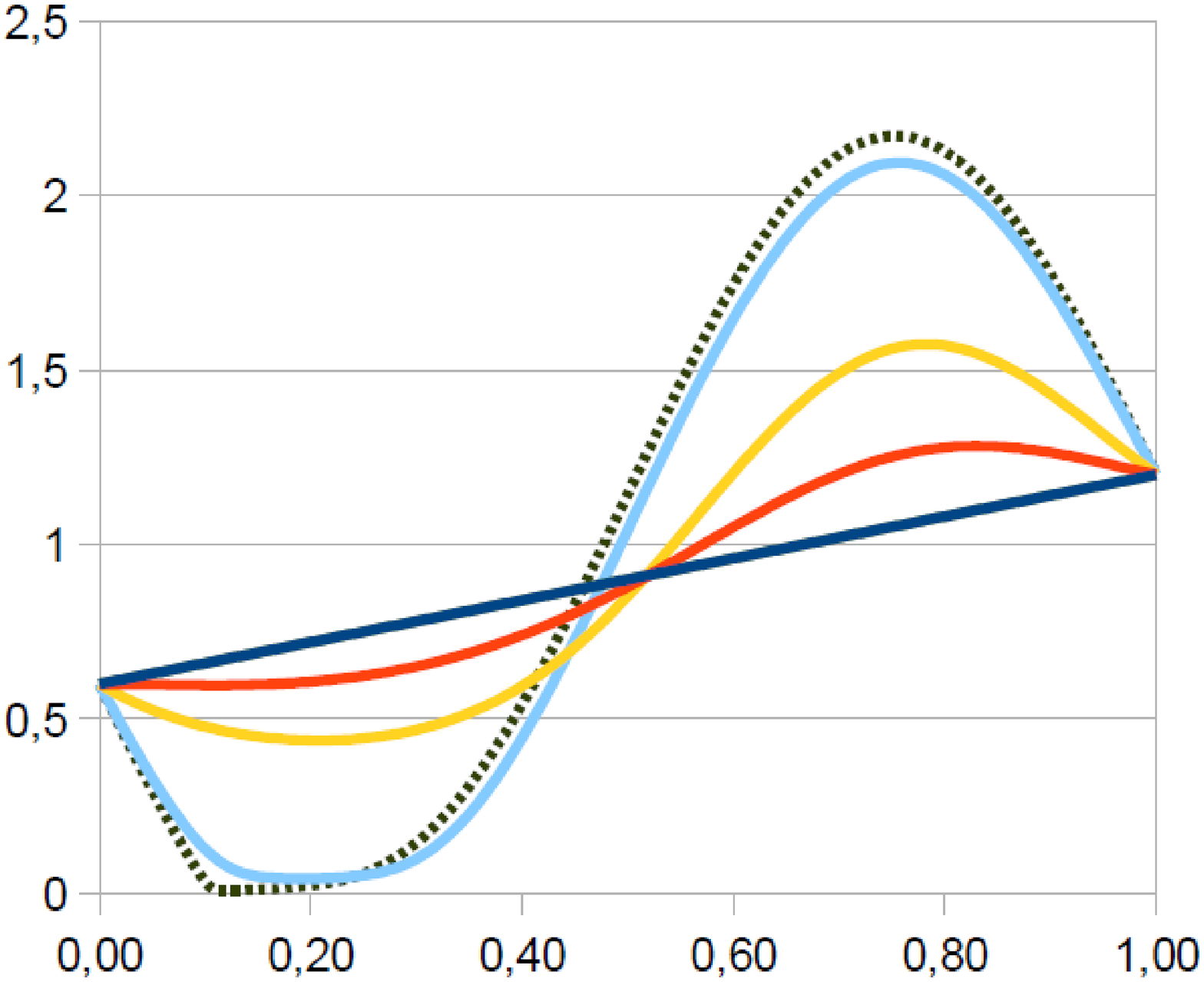}\\
\caption{Evolution towards the stationary state in the 1D case with sinusoidal potential (\ref{Vsin}). 
In each panel the initial datum is the straight line and the
asymptotic solution is the dotted black curve.
The other lines correspond to $t = 0.012$, $t =  0.024$, $t = 0.048$, $t = 0.1$. 
In panels (a) and (b) the two Dirichlet data are both supercritical 
(in particular, in panel (b) the asymptotic solution  is $u_0 - V(x_1)$). 
In panels (c) and (d), one of the data is subcritical (respectively, $u_1$ and $u_0$).
} 
\label{fig:0}
\end{center}
\end{figure}
\par
In panels (a), (b), (c) and (d) we have chosen, respectively,
$$
\begin{aligned}
 &(u_0,u_1) = (1.2,2.2),&  \quad   &(u_0,u_1) = (1.2,1.2),
 \\
 &(u_0,u_1) = (1.2,0.2),&  \quad   &(u_0,u_1) = (0.6,1.2).
\end{aligned}
$$
Then, in cases (a) and (b) both data are supercritical while, in case (c), $u_1$ is subcritical and, in case (d),
$u_0$ is subcritical.
We recall that Theorem \ref{Theo1D} guarantees the well-posedness of the stationary problem if at least one
of the Dirichlet data is supercritical.
\par
In case (b) since the potential (\ref{Vsin}) is compatible with equal Dirichlet data, the asymptotic solution is
exactly $u(x_1,\infty) = U(x_1) = u_0 - V(x_1)$ (corresponding to $c = 0$, see definition (\ref{Udef}) and Lemma \ref{Lemmac0}).
In cases (a) and (d), since $u_1 > u_0$, we have that $c>0$ and in case (c), where $u_1 < u_0$, we have that $c<0$
(see the proof of Theorem \ref{Theo1D} with $\Delta V = 0$). 
In the three cases (a), (c) and (d), therefore, the asymptotic solution is {\em not} $U(x_1)$ (which corresponds to $u_1 = u_0$, in the 
present case where $\Delta V = 0$).
Nevertheless, it is interesting to see that the asymptotic curves in Figure \ref{fig:0} tend to have the same character as $U(x_1)$, 
i.e.\ regular in the supercritical cases and piecewise regular in the subcritical cases
(however, as long as $t$ is finite, the solution is everywhere regular).
\subsection{Analysis of the current in the one-dimensional case: monotone potentials}
We consider again Equation (\ref{FDequation}) with the conditions (\ref{1Dcond}) and the following data:
and boundary conditions:
$$
u_\mathrm{in}=2-x_1,\qquad u_0=2, \qquad u_1=1.
$$
For this experiment, we have chosen the potential
$$
V(x_1) = -x_1.
$$
Since 
$$
u_0^\mathrm{crit}=0,\qquad u_1^\mathrm{crit}=1,
$$
the boundary condition $u_0$ in this simulation is supercritical.
\begin{figure}[h!]
\begin{center}
\includegraphics[width=10cm]{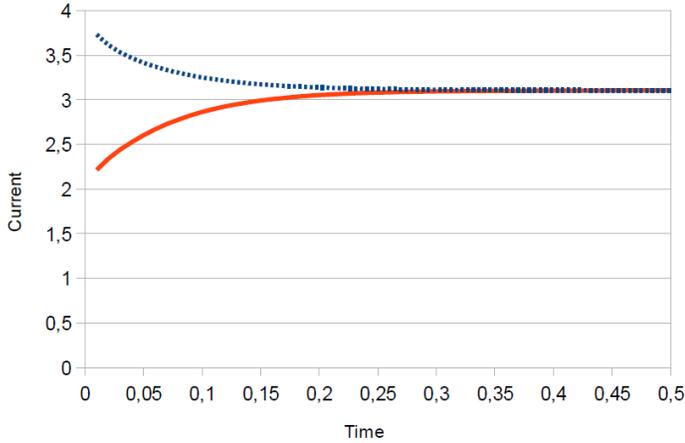}
\caption{Currents $J_L$ (blue dotted line) and $J_R$ (red continuous line) versus time under the action of the potential 
$V(x_1)=-x_1$.} \label{fig:2bisbis}
\end{center}
\end{figure}
In Figure \ref{fig:2bisbis}, we plot the current versus time that passes
through the boundaries $\Gamma_2$ (blue dotted line)
and $\Gamma_4 \cup \Gamma_5$ (red continuous line),
i.e.\ we visualize the time evolution of
$$
J_R :=\int_{\Gamma_2} J\cdot n_x\, dx_2 \textrm{ and }
J_L := \int_{\Gamma_4\cup \Gamma_5} J\cdot n_x\, dx_2
$$
respectively.

We note that, after a transient period, both currents at the extremities of the
device tend monotonically to the common value 3.11.

In the second numerical simulation, we have left unchanged the initial and boundary conditions,
but we have chosen a different potential, namely
$$
V(x_1) = -x_1+e^{-x_1^2}.
$$
This potential is non-linear, but is still monotone.
Here
$$
u_0^\mathrm{crit}=0,\qquad u_1^\mathrm{crit}=2-1/e \approx 1.63,
$$
hence the boundary condition $u_1$ is supercritical.
 
In Figure \ref{fig:2bis}, we plot the current versus time that passes
through the boundaries $\Gamma_2$ (blue dotted line)
and $\Gamma_4 \cup \Gamma_5$ (red continuous line)
respectively.

We note a different behaviour of $J_L$ and $J_R$ with respect to the previous simulation
in the transient period before reaching the equilibrium: $J_L$ is still a monotone
increasing function, but $J_R$ is no longer monotone. At time $t=1.4$ (not plotted in Figure \ref{fig:2bis}) the two currents at
the extremities of the domain have reached the common asymptotic value 4.01.

\begin{figure}[h!]
\begin{center}
\includegraphics[width=10cm]{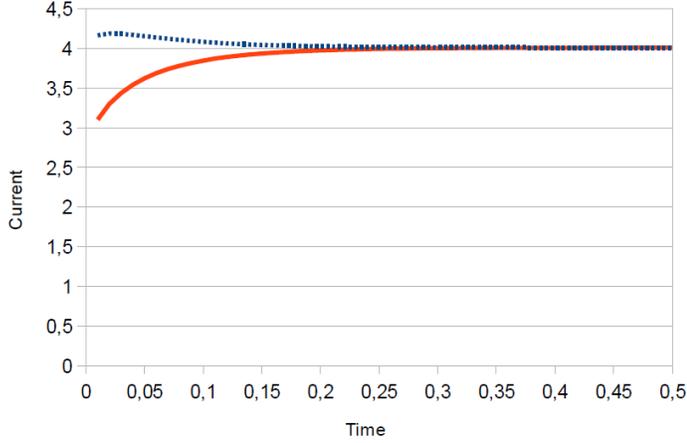}
\caption{Currents $J_L$ (blue dotted line) and $J_R$ (red continuous line) versus time under the action of the potential 
$V(x_1)= -x_1+e^{-x_1^2}$.} \label{fig:2bis}
\end{center}
\end{figure}

\subsection{Analysis of the current in the one-dimensional case: a potential barrier}

We now consider Equation (\ref{FDequation}) under the action of the potential
$$
V(x_1) = e^{-(x_1-.5)^2}.
$$
Here the critical values of the problem are
$$
u_0^\mathrm{crit}= u_1^\mathrm{crit}=1-e^{1/4} \approx 0.22.
$$

We compare the time evolution of the current at the extremities of the device
in two situations. 
The first one, whose results are plotted in Figure \ref{fig:1bis}, has been obtained with the initial data and the
boundary conditions
$$
u_\mathrm{in}=2-x_1,\qquad u_0 = 2, \qquad u_1 = 1.
$$

\begin{figure}[h!]
\begin{center}
\includegraphics[width=10cm]{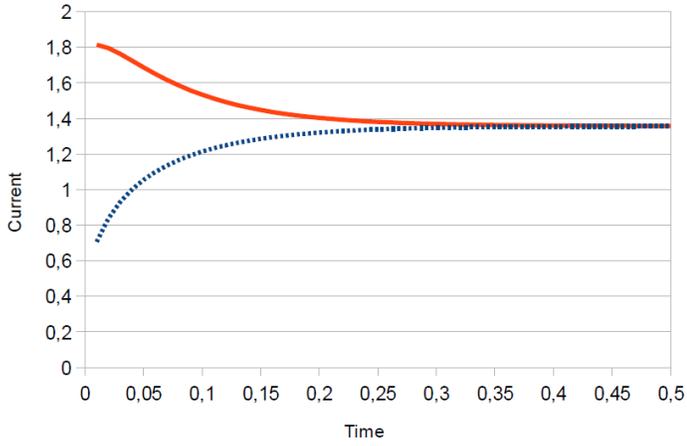}
\caption{Currents $J_L$ (blue dotted line) and $J_R$ (red continuous line) versus time under the action of the potential 
$V(x_1) = e^{-(x_1-.5)^2}$ and small gap between the boundary data in $\Gamma_2$ and
$\Gamma_4$.} \label{fig:1bis}
\end{center}
\end{figure}

The second situation, whose result are plotted in Figure \ref{fig:1bisbis}, has been obtained by imposing the following initial data and the boundary conditions:
$$
  u_\mathrm{in}=6-5x_1,\qquad u_0 = 6, \qquad u_1 = 1.
$$

\begin{figure}[h!]
\begin{center}
\includegraphics[width=10cm]{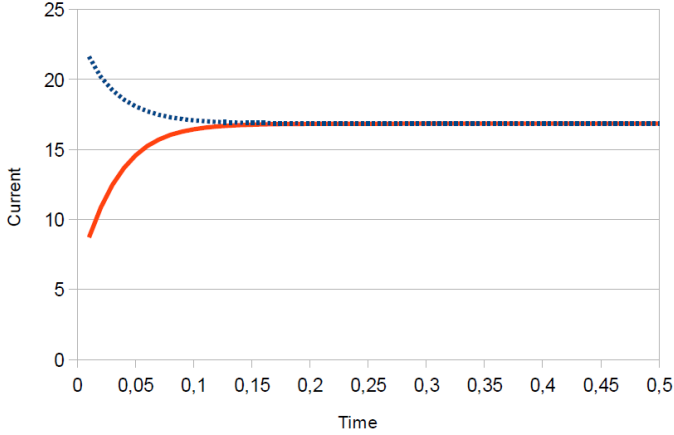}
\caption{Currents $J_L$ (blue dotted line) and $J_R$ (red continuous line) versus time under the action of the potential 
$V(x_1) = e^{-(x_1-.5)^2}$ and wide gap between the boundary data in $\Gamma_2$ and
$\Gamma_4$.} \label{fig:1bisbis}
\end{center}
\end{figure}

The numerical results of Figures \ref{fig:1bis} and \ref{fig:1bisbis} show that the stationary current flowing through the device 
in the case of a small density gap differ considerably from the one that is obtained in the case of a wide density gap 
(numerically we get a value which is close to 1.36 in the first case and the value 16.84 in the second). 
Moreover, in the first case, the convergence speed to the asymptotic state is slower than in the second case.

We have finally computed, for $a\in[0,3]$, the asymptotic common value at the
ends of the device versus $a$, which is a parameter that controls the gap
of the boundary data between the ends $\Gamma_2$ and $\Gamma_4$, through the choice
of the following initial and boundary conditions:
$$
u_\mathrm{in}=.5 +a(1-x),\qquad u_0 = .5 +a, \qquad u_1 = .5.
$$
Both boundary data are supercritical.
\begin{figure}[h!]
\begin{center}
\includegraphics[width=10cm]{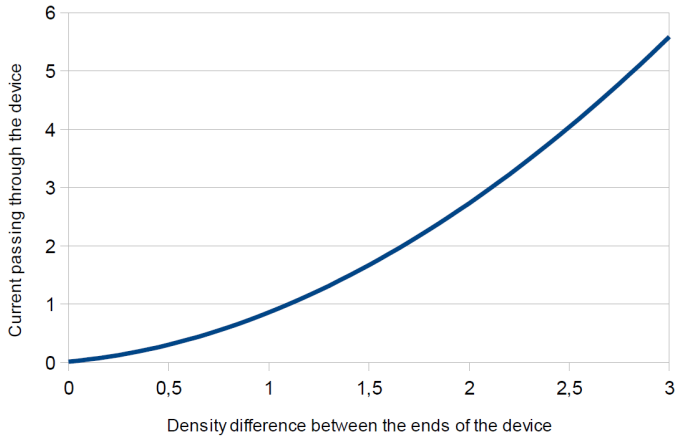}
\caption{Asymptotic current at the ends of the device under the action 
of the potential $V(x_1) = e^{-(x_1-.5)^2}$ versus the gap between the boundary data in $\Gamma_2$ and
$\Gamma_4$.} \label{fig:6}
\end{center}
\end{figure}

The results, plotted in Figure \ref{fig:6}, show that the current is zero when $a=0$ and that it is strictly monotone. The profile is parabolic with a good approximation. This
simulation has been very time-consuming, since we had to repeat, for each
element of the discretization of $a\in[0,3]$, a computation in long-time that gives the asymptotic common value of
the currents at the ends of the device.
The discretization between the values of $a$ is $\Delta a=0.05$, and the
final time of each simulation has been $t=2.5$. The discrepancies observed between the numerical values of $J_R(t=2.5)$ and
$J_L(t=2.5)$ are, for each value of $a$, beyond the resolution of Figure \ref{fig:6}.

\subsection{Analysis of the current in a two-dimensional device in a non-symmetric situation}

We conclude the description of our simulations with a test-case that is widely used
in the literature \cite{MRSbook}.
Consider the system defined by 
(\ref{FDequation}) under the action of the linear potential 
$$
 V(x) = 1-x_1.
$$
In this case, the initial data and boundary conditions are
$$
u_\mathrm{in}(x)=\cos(\pi x_1)+2,
$$
and 
$$
J|_{\Gamma_1}=J|_{\Gamma_3}=J|_{\Gamma_4}=0, \qquad
u|_{\Gamma_2}=1,\qquad 
u|_{\Gamma_5}=3.
$$

Note that, even though $u_\mathrm{in}$  and $V$ depend only on the first variable $x_1$, 
the problem is genuinely two-dimensional because of the boundary conditions.
\par
\begin{figure}[h!]
\begin{center}
a)\ \includegraphics[height=3.8cm]{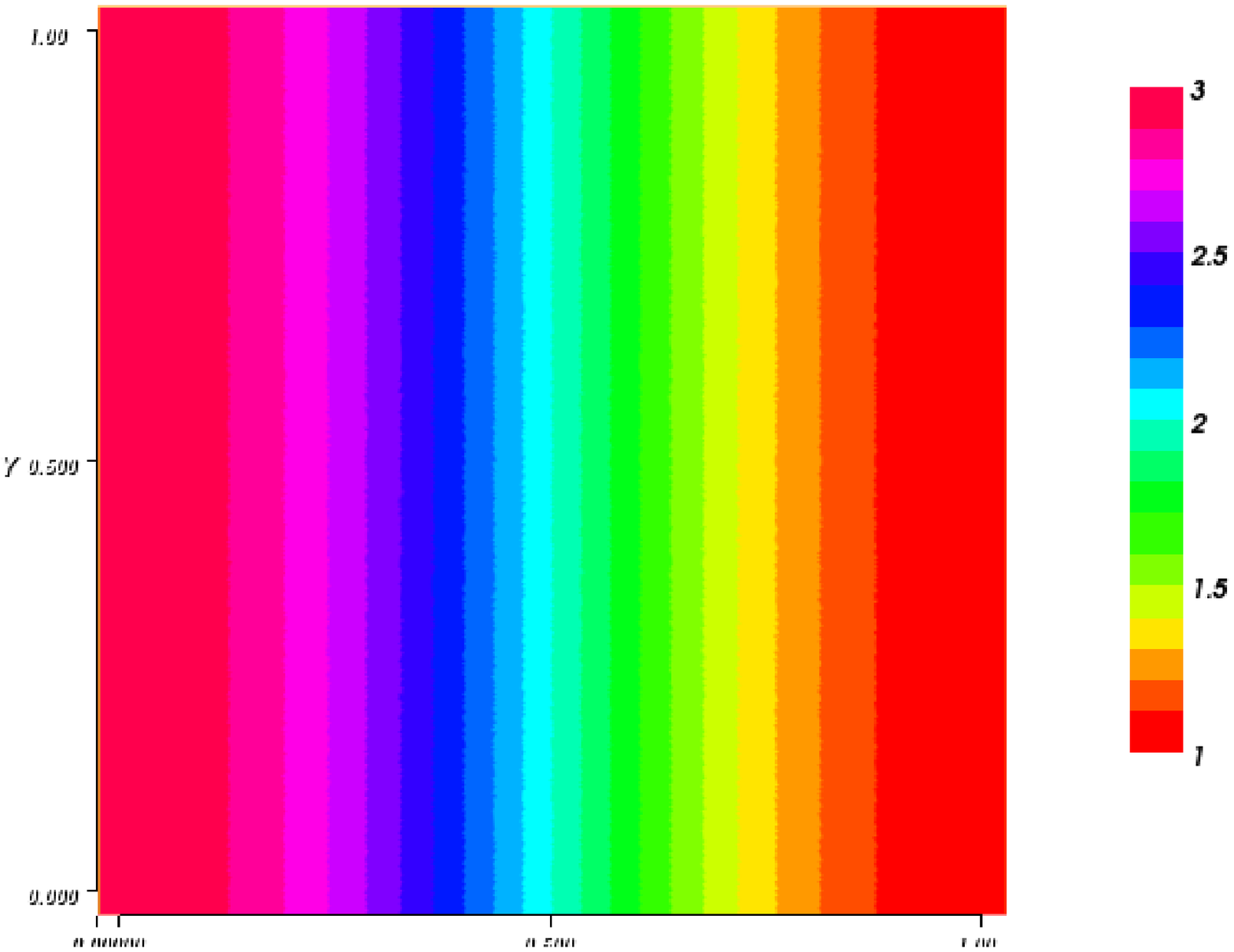}\
b)\ \includegraphics[height=3.8cm]{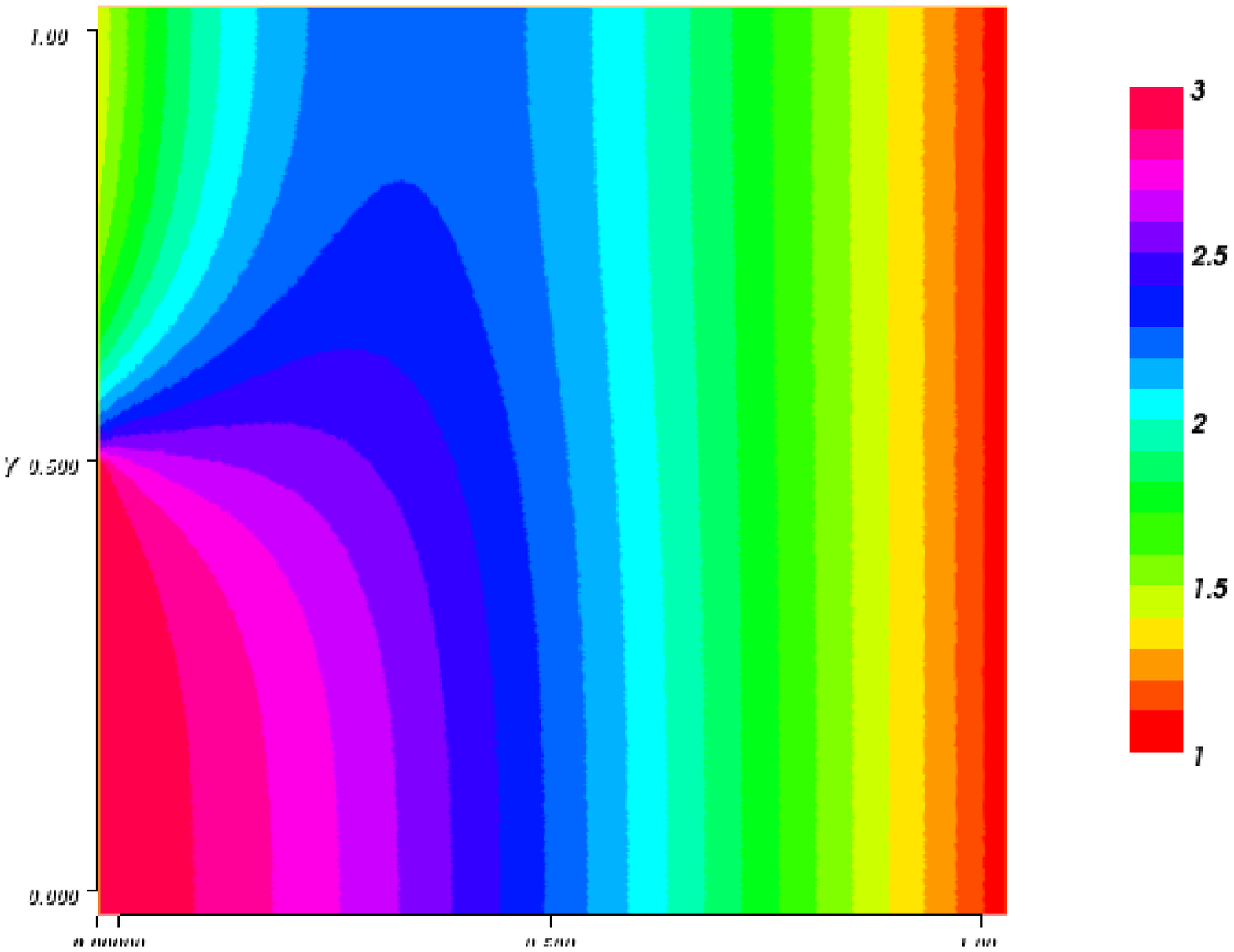}\\[1.2ex]
c)\ \includegraphics[height=3.8cm]{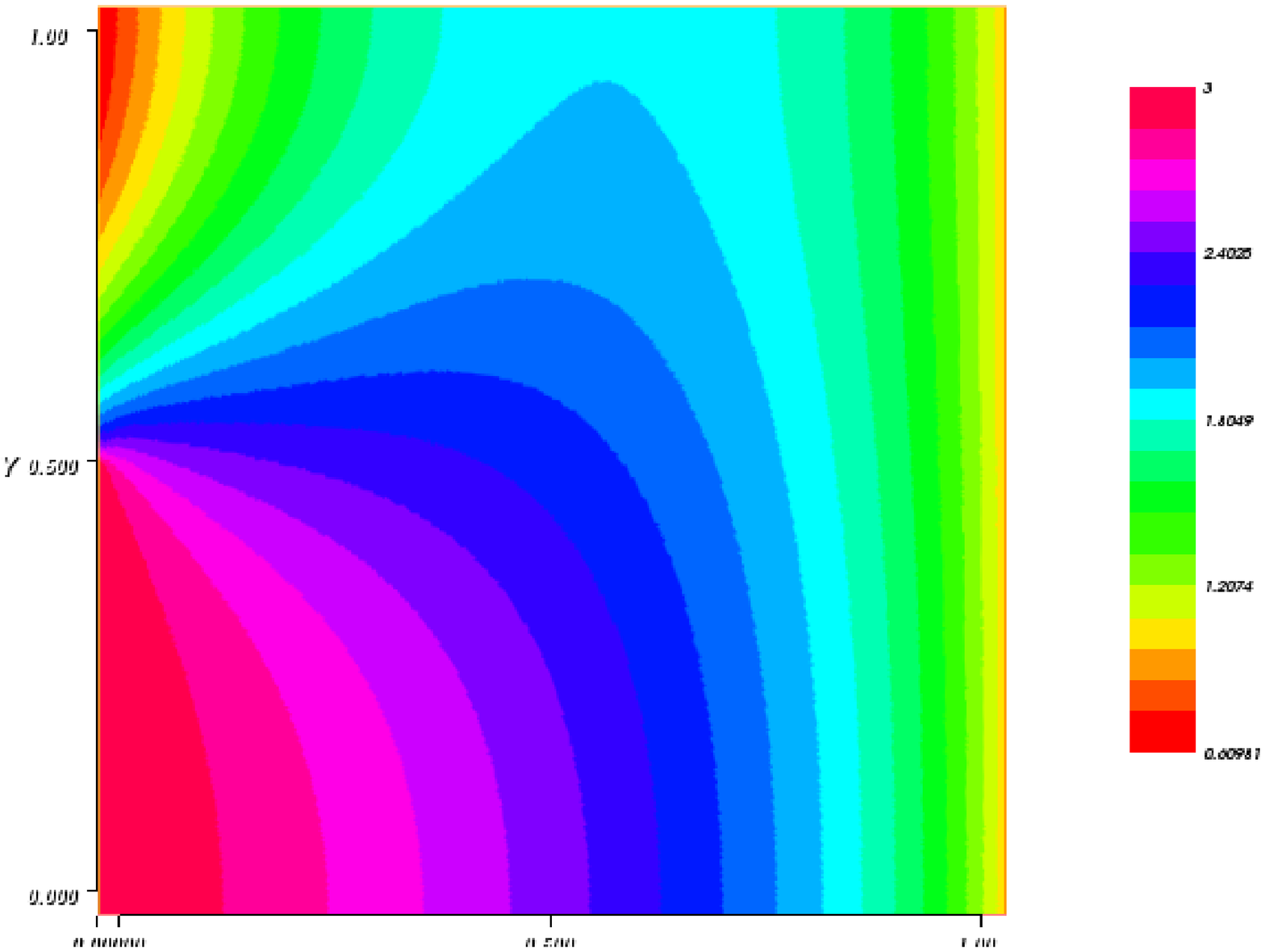}\
d)\ \includegraphics[height=3.8cm]{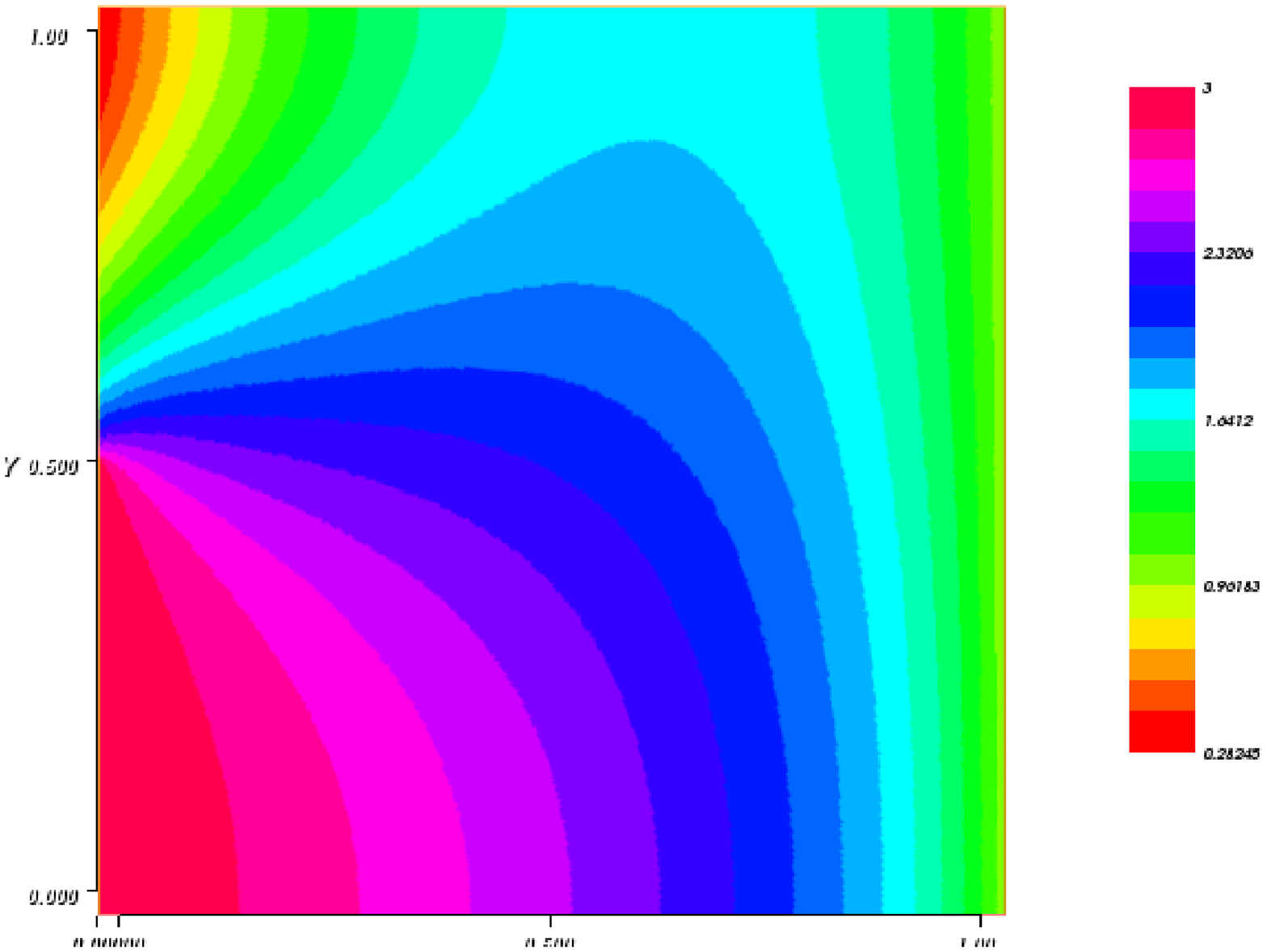}\\ [1.2ex]
e)\ \includegraphics[height=3.8cm]{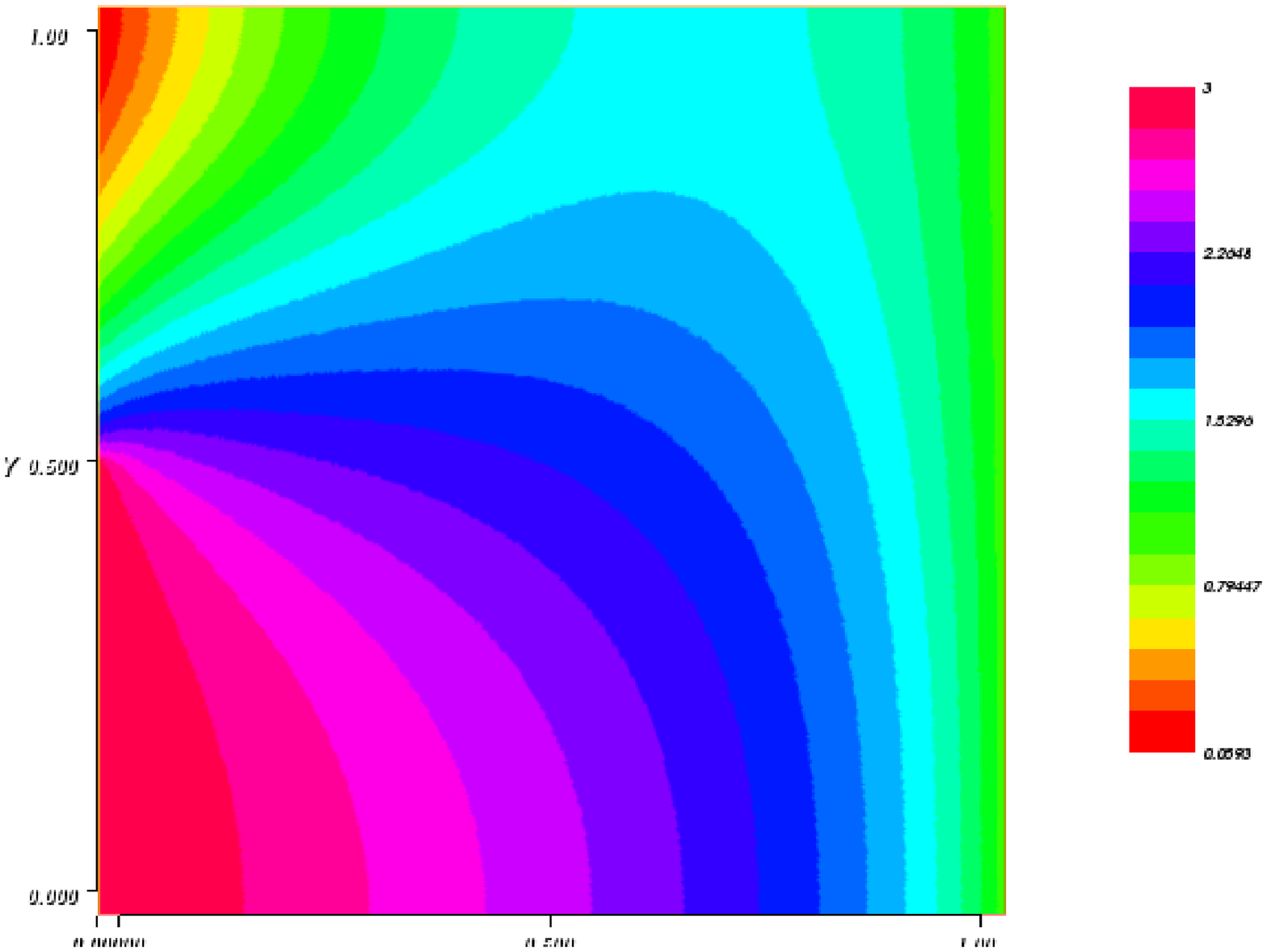}
f)\ \includegraphics[height=3.8cm]{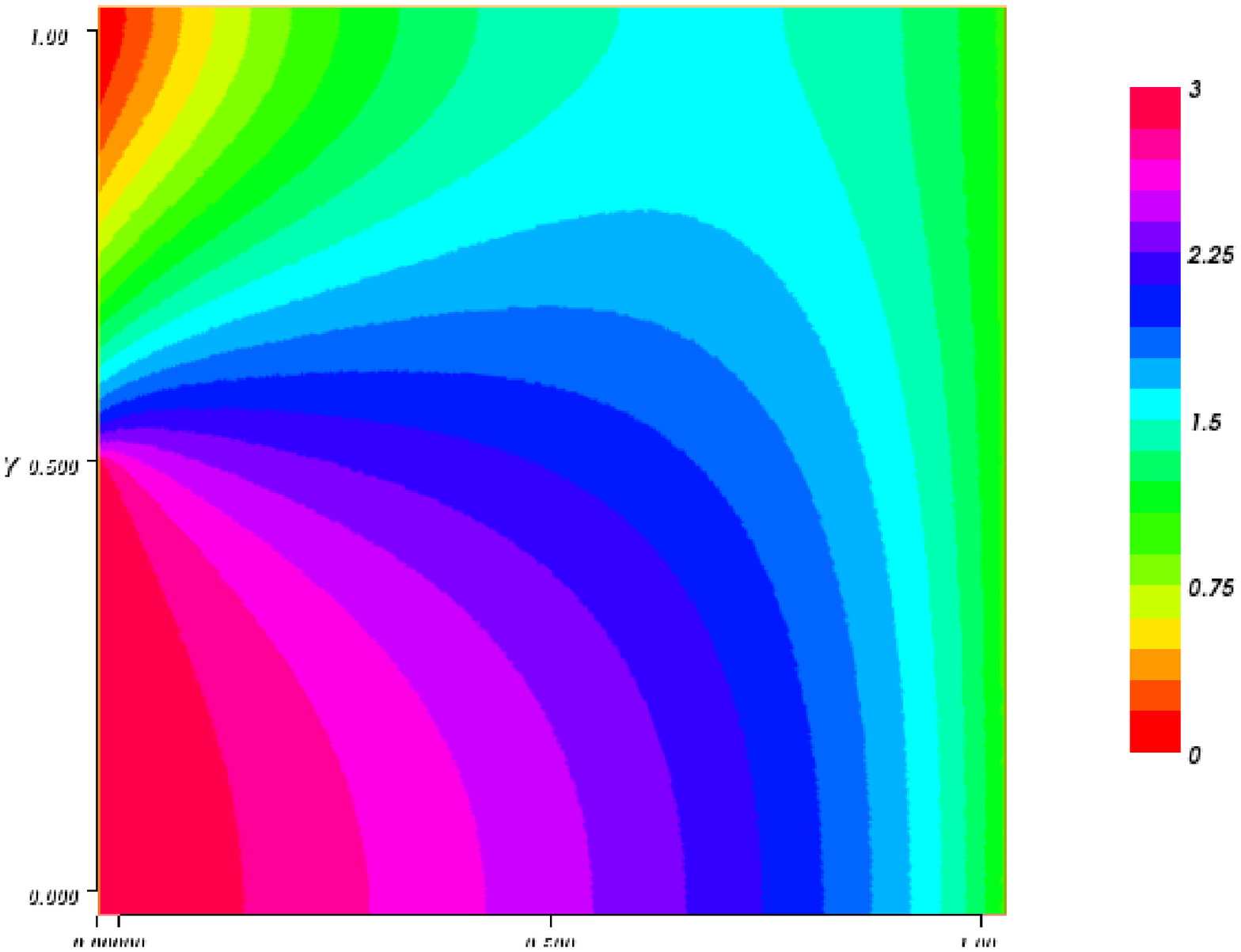}
\caption{Contours of the density $u$ at six different instants: a) $t=0$ (initial condition), 
b) $t=0.10$, c) $t=à0.30$,
d) $t=0.50$, e) $t=0.80$ and f) $t=1.00$.} \label{fig:4}
\end{center}
\end{figure}

In Figure \ref{fig:4}, we show the time evolution of the unknown density $u$. 
At time $t=1$ the system has already reached an almost stationary configuration.

\begin{figure}[h!]
\begin{center}
\includegraphics[width=10cm]{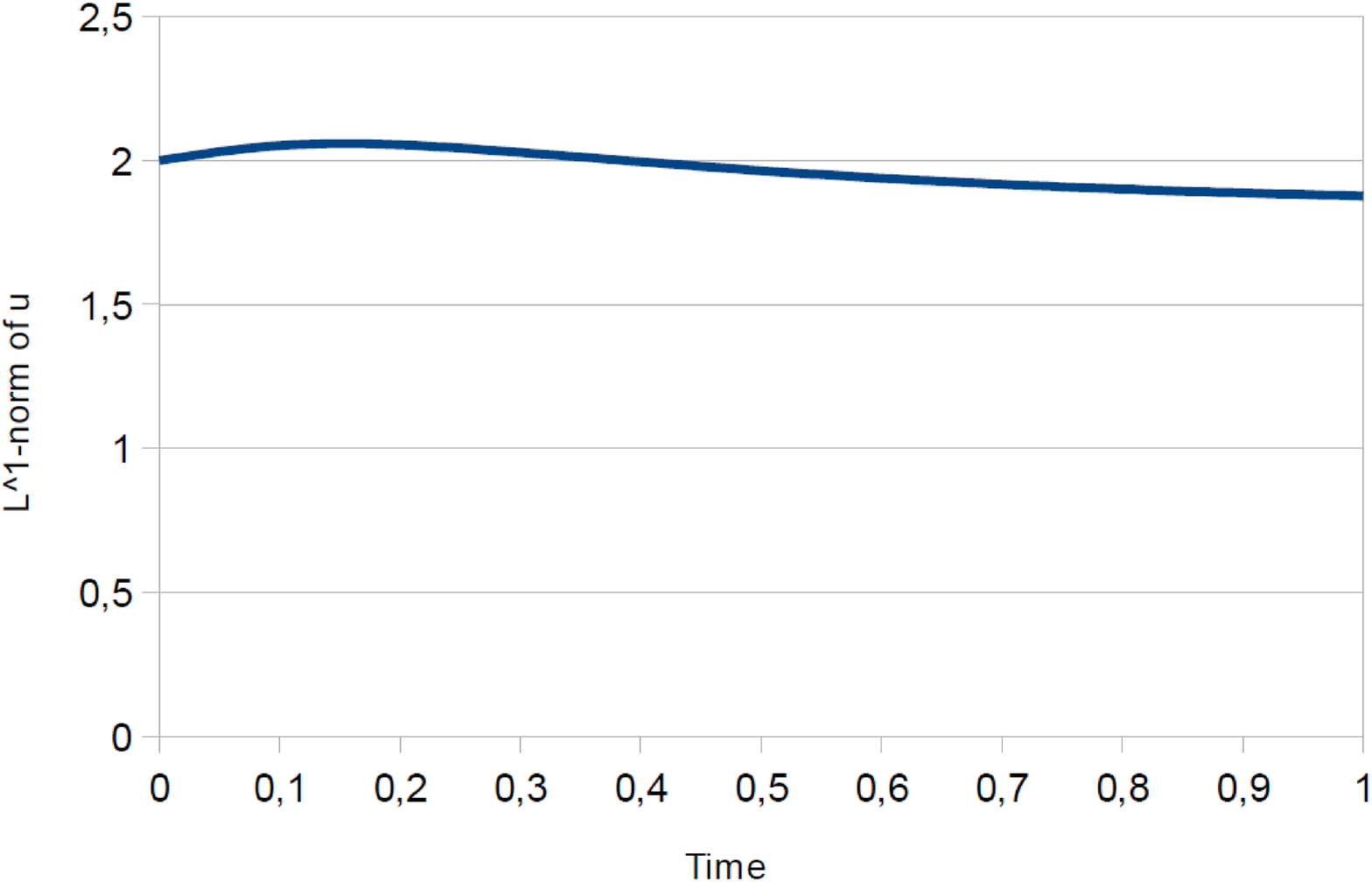}
\caption{Time evolution of $\Vert u\Vert_1$.} \label{fig:3}
\end{center}
\end{figure}

We also show in Figure \ref{fig:3} the time evolution of the $L^1$-norm of the density $u$. 
Note that the total mass of the problem is not
conserved in time, but has a non-monotone behaviour around the value $\Vert u_\mathrm{in}\Vert_{L^1(\Omega)}=2$.

%%%%%%%%%%%%%%%%%%%%%%%%%%%%%%%%%%%%%%%%%%%%%%%%%%%%%%%%%%%%%%%%%%

\end{document}